\theoremstyle{plain}
\newtheorem{theorem}{Theorem}[section]
\newtheorem{proposition}[theorem]{Proposition}
\newtheorem{corollary}[theorem]{Corollary}
\newtheorem{lemma}[theorem]{Lemma}
\theoremstyle{definition}
\theoremstyle{remark}
\numberwithin{equation}{section}
\begin{document}

\title{Conical Distributions on the Space of Flat Horocycles}

\author{Fulton B. Gonzalez}
\address{Department of Mathematics,
Tufts University,
Medford, MA 02155}
\email{fulton.gonzalez@tufts.edu}
\subjclass[2000]{Primary: 43A85}



\maketitle

\def\rar{\rightarrow} 
\def\fk{\mathfrak}
\def\e{\mathfrak e}
\def\g{\mathfrak g} 
\def\ge{\mathfrak g} 
\def\k{\mathfrak k} 
\def\a{\mathfrak a} 
\def\m{\mathfrak m} 
\def\n{\mathfrak n}
\def\p{\mathfrak p} 
\def\u{\mathfrak u} 
\def\t{\mathfrak t} 
\def\h{\mathfrak h} 
\def\z{\mathfrak z}
\def\d{\mathfrak d}
\def\q{\mathfrak q}

\def\Bbb{\mathbb}
\def\Cal{\mathcal}
\begin{abstract}
Let $G_0=K\ltimes\mathfrak p$ be the Cartan motion group associated
with a noncompact semisimple Riemannian symmetric pair 
$(G,\,K)$.  Let $\frak a$
be a maximal abelian subspace of $\mathfrak p$ and let $\p=\a+\q$ be
the corresponding orthogonal decomposition.  A flat horocycle in $\p$ is a
$G_0$-translate of $\q$.   A conical distribution on the space $\Xi_0$
of flat horocycles is an eigendistribution of the algebra $\mathbb
D(\Xi_0)$ of $G_0$-invariant differential operators on $\Xi_0$ which
is invariant under the left action of the isotropy subgroup of $G_0$
fixing $\q$.   We prove that the space of conical distributions
belonging to each generic eigenspace of $\mathbb D(\Xi_0)$ is
one-dimensional, and we classify the set of all conical distributions on $\Xi_0$ 
when $G/K$ has rank one.
\end{abstract}
\section{Introduction and Preliminaries}
 In this paper we study the flat analogues of conical distributions on the
 space of horocycles associated with noncompact symmetric spaces. Let $G$ be a noncompact real semisimple Lie group with
finite center, let $\fk g$ be its Lie algebra, and let $K$ be a maximal compact
subgroup of $G$.   Let $\theta$ be the corresponding Cartan
involution of $G$, and we also let $\theta$ denote its differential on
$\mathfrak g$.  Let $\k$ be the Lie algebra of
$K$ and $\p$ its orthogonal complement relative to the Killing form
$B$ on $\g$, so that $\g$ has Cartan decomposition $\g=\k+\p$.  We will generally use the
notation in Helgason's books \cite{DS}, \cite{GGA}, and \cite{GASS}.  In particular, 
we let $\a$  denote a maximal abelian subspace of $\p$, $\Sigma$  the 
set of restricted roots of $\g$ relative to $\mathfrak a$,  $W$  the Weyl
group of $\Sigma$, $\ge_\alpha$  the restricted root space corresponding to
$\alpha\in\Sigma$ and $m_\alpha$ its dimension.
In addition, let $\mathfrak a^+$ denote a fixed Weyl chamber in $\mathfrak a$,
$\Sigma^+$ the corresponding positive system of restricted roots, and
$\rho=1/2\,\sum_{\alpha\in\Sigma^+} m_\alpha \alpha$.  We  put
$\n=\sum_{\alpha\in\Sigma^+}\ge_\alpha$, and let $N$ and $A$ be the 
analytic subgroups of $G$ with Lie algebras $\n$ and $\a$, respectively.
Then $G$ has Iwasawa decomposition $G=NAK$.  Finally, we let $M$ and $M'$
denote the centralizer and normalizer of $A$ in $K$, respectively.
Then $W=M'/M$.  We let $w$ be the order of $W$.

We identify $\mathfrak p$ with $\mathfrak p^*$ (respectively
$\mathfrak a$ with $\mathfrak a^*$) via the restriction of the Killing
form $B$ to $\mathfrak p$ (respectively $\mathfrak a$).  In this way,
elements of the (complexified) symmetric algebra $S(\mathfrak p)$ can
be viewed as polynomial functions on $\mathfrak p$, and also with
constant coefficient differential operators on $\mathfrak p$.   If
$p\in S(\mathfrak p)$, we let $\partial(p)$ be the corresponding
differential operator on $\mathfrak p$.

A {\it horocycle} in the symmetric space $X=G/K$ is 
an orbit of a conjugate of $N$ in $X$.  The group $G$ acts transitively on the space $\Xi$
of all horocycles, and the isotropy subgroup of $G$ fixing the
identity horocycle $\xi_0=N\cdot o$ is $MN$, so that $\Xi=G/MN$.  The
mapping $(kM,a)\mapsto ka\cdot\xi_0$ is a diffeomorphism of $K/M\times
A$ onto $\Xi$  (\cite{GASS}).

According to \cite{GASS}, Chapter II, the algebra $\mathbb D(\Xi)$ of $G$-invariant differential operators on $\Xi$ is
isomorphic to $S(\a)$, the symmetric algebra of $\a$, via
$$
D_p\,\varphi(k\exp H\cdot\xi_0)=\partial(p)_H\,\varphi(k\,\exp H\cdot \xi_0),\qquad (p\in S(\a))
$$
 If
$\mathfrak a^*_c$ is the complexified dual space of $\a$, then the
set of all joint eigendistributions of $\mathbb D(\Xi)$ is parametrized by
$\mathfrak a^*_c\times \mathcal D'(K/M)$.  More precisely, if we fix
$\lambda\in\mathfrak a^*_c$, then the joint eigenspace $\mathcal
D'_\lambda(\Xi)=\{\Psi\in\mathcal
D'(\Xi)\,|\,D_p\,\Psi=p(i\lambda-\rho)\,\Psi\;\text{for all}\; p\in
S(\mathfrak a) \}$ consists precisely of those distributions in $\Xi$ of the form
\begin{equation}\label{E:eigendist01}
  \Psi(\varphi)=\int_{K/M}^{}\int_{A}^{}\varphi(kM,a)\,e^{(i\lambda+\rho)(\log a)}\,da\,dS(kM)\qquad
(\varphi\in\mathcal D(\Xi))
\end{equation}
 for some $S\in \mathcal D'(K/M)$.

A {\it conical distribution} in $\Xi$ is an $MN$-invariant joint
eigendistributon of $\mathbb D(\Xi)$.  If $\lambda$ is regular and
simple, it turns out that the vector space of conical distributions is
$w$-dimensional, and an explicit basis $\{\Psi_{\lambda,s}\}$ can be
found in \cite{Duality}, each of which is supported in a Bruhat orbit
in $\Xi$.  For exceptional $\lambda$, the problem of
classification of the conical distributions turns out to be much more
difficult, although for rank one it is completely solved (\cite{Duality}, \cite{Hu}).

In this paper we consider the analogue of conical distributions on the
space of flat horocycles: these would be the translates, under the Cartan motion
group, of the tangent space at the origin $o$ in $X$
to the identity horocycle $\xi_0=N\cdot o$.

Explicitly, let us consider the Cartan motion group $G_0=K\ltimes \fk p$.
$G_0$ has group law $(k,X)\,(k',X')=(kk',X+k\cdot X')$, for $k,\,k'\in K$ and $X,\,X'\in\mathfrak p$, where we have
put $k\cdot X'=Ad\;k(X')$.  The mapping 
\begin{equation}\label{E:g-ident}
  (T,X)\mapsto T+X
\end{equation}
identifies
the Lie algebra $\mathfrak g_0$ of $G_0$ with $\mathfrak g$ as vector
spaces.  Under this identification, the adjoint representation $\text{Ad}_0$ of $G_0$ on $\mathfrak g_0$ is
given by
\begin{equation}\label{E:ad0}
  \text{Ad}_0\,(k,X)\,(T'+X')=\text{Ad}\,k\,(T')+k\cdot
X'-[\text{Ad}\,k\,(T'),X]
\end{equation}
and the Lie bracket $[\phantom{X},\phantom{Y}]_0$ on $\mathfrak g_0$ is given by
\begin{equation}\label{E:g0-bracket}
  [T+X,T'+X']_0=[T,T']+[T,X']-[T',X]
\end{equation}
with $T,\,T'\in\mathfrak k,\,
X,\,X'\in\mathfrak p$, where the Lie brackets on the right are taken
in $\mathfrak g$.  In effect, the Lie bracket on $\mathfrak g_0$ is the same as that on $\mathfrak g$, except that the subspace $\mathfrak p$ has been made abelian.

 Now $G_0$  acts transitively on $\fk
p$ by $(k,X)\cdot Y=X+k\cdot Y$, with $k\in K$ and $X,\,Y\in\p$.  Let
 $\fk q$ be the orthogonal complement of $\fk a$ in $\fk p$.  If we identify $\p$ with the tangent space $T_o X$, we have
$\mathfrak q= T_o(N\cdot o)$. A
{\it flat horocycle} is a translate of $\fk q$ by an element of $G_0$.
Let $\Xi_0$ be the set of all flat horocycles.  Then of course $\Xi_0$ is a
homogenous space of $G_0$, and its isotropy subgroup at $\fk q$ given by
$H=M'\ltimes \fk q$ (\cite{GASS}).

The flat horocycle Radon transform of  $f\in C_c(\mathfrak p)$ is
 the function on $\Xi_0$ defined by
$$
Rf(\xi)=\int_{\xi}^{}f(X)\,dm(X)\qquad (\xi\in \Xi_0)
$$
where $dm(X)$ is the Euclidean measure on $\xi$.  One may view this as
the flat analogue of the horocycle Radon transform on a noncompact
symmetric space $X$.  Properties of this
transform, such as an inversion formula, and range and support
theorems,  have been studied in several papers (\cite{Duality3},
\cite{GASS}, \cite{Orloff1}, \cite{Orloff2}).  

For each $s\in W$, we choose a representative $m_s\in M'$.
Then the map $\pi:K/M\times \fk a\to \Xi_0$ given by
$\pi(kM,H)=k\cdot(H+\fk q)$ is  $w$ to one, with
$\pi(kM,H)=\pi(km_s^{-1}M,sH)$. We can thus identify $\Xi_0$
with the associated bundle $K/M\times_W \fk a$ over $K/M'$, where $K/M$ can be
viewed as a principal bundle over $K/M'$ with discrete structure group
$W=M'/M$.  For convenience, we put $[kM,H]=\pi(kM,H)$.  It will be
clear from the context that this will not be confused with the Lie
bracket.

Using the above notation, the action of $G_0$ on $\Xi_0$
is given by
\begin{align}
  (k,X)\cdot [k_0M,H_0]&=X+k\cdot\left(k_0\cdot(H_0+\fk q)\right)\nonumber\\
&=kk_0\cdot\left( H_0+((kk_0)^{-1}\cdot X)_\fk a+\fk
  q\right)\nonumber\\
&=[kk_0M,H_0+((kk_0)^{-1}\cdot X)_\fk a]\label{E:gpaction1}
\end{align}
Here $X_\fk a$ is the orthogonal projection (under the Killing form)
of $X\in \fk p$ onto $\fk a$.

It will be convenient to note that $G_0$ also acts transitively on the
product manifold $\widetilde\Xi_0=K/M\times\fk a$ via
\begin{equation}\label{E:g0-action1}
   (k,X)\cdot
(k_0M,H_0)=(kk_0M,H_0+((kk_0)^{-1}\cdot X)_\fk a).
\end{equation}
  (That this is a
group action is straighforward to verify.)  We can think of
$\widetilde \Xi_0$ as the space of oriented flat horocycles in
$\mathfrak p$. 
 The isotropy subgroup of
$G_0$ at the origin
$\widetilde\xi_0=(eM,0)\in\widetilde\Xi_0$ is $M\ltimes\mathfrak
q$. From \eqref{E:gpaction1} and \eqref{E:g0-action1}, it is immediate that the
projection map $\pi:\widetilde\Xi_0\to\Xi_0$ commutes with the
action of $G_0$.  It will frequently be useful to do  calculations on
$\Xi_0$  by  lifting them up to $\widetilde \Xi_0$.  All groups
being unimodular, there are unique (up to constant multiple) 
$G_0$-invariant measures on $\Xi_0$ and on $\widetilde\Xi_0$, which we
can take in both cases to be $dk_M\,dH$.

\section{Invariant Differential Operators on $\Xi_0$ and $\widetilde\Xi_0$}
In this section we determine the algebras $\mathbb D(\Xi_0)$ and
$\mathbb D(\widetilde \Xi_0)$ of  $G_0$-invariant differential operators on $\Xi_0$ and
$\widetilde \Xi_0$, respectively.

All algebras here are over $\mathbb C$.
Let $I(\mathfrak p)$ and $I(\mathfrak a)$ be the subalgebras of $\text{Ad}\,K$-invariant
elements of $S(\p)$ and of $W$-invariant elements of $S(\mathfrak a)$,
respectively.
It is clear that the algebra $\mathbb D(\fk p)$ of $G_0$-invariant differential
operators on $\fk p$ is  $I(\fk p)$.
It is also a well-known fact that the restriction mapping $p\mapsto \overline p=p|_\mathfrak a$ is
an isomorphism of $I(\mathfrak p)$ onto $I(\mathfrak a)$.

Now let $P\in\mathcal S(\fk a)$.  Then from \eqref{E:g0-action1} the
differential operator $D_P$ on $\widetilde \Xi_0$ given by
\begin{equation}\label{E:constcoeff1}
  D_P\,\Phi(kM,H)=\partial(P)_H\,\Phi(kM,H)\qquad(\Phi\in\mathcal
E(\widetilde\Xi_0))
\end{equation}
is easily seen to belong to $\mathbb D(\widetilde\Xi_0)$.  If $P\in I(\fk a)$, we abuse
notation and also use $D_P$ to denote the (well-defined) differential operator on $\Xi_0$
given by
\begin{equation}\label{E:constcoeff2}
  D_P\,\varphi[kM,H]=\partial(P)_H\,\varphi[kM,H]\qquad(\varphi\in\mathcal E(\Xi_0))
\end{equation}
Then it follows from \eqref{E:gpaction1} that $D_P\in\mathbb D(\Xi_0)$.
For $\varphi\in\mathcal E(\Xi_0)$, put $\widetilde\varphi=\varphi\circ\pi$.
Then clearly
\begin{equation}\label{E:lift1}
  (D_P\varphi)^{\widetilde{}}=D_P\widetilde\varphi
\end{equation}

For $P\in\mathcal S(\fk a)$, we let $P^*$ be its formal adjoint in
$\fk a$.  Then the adjoint of the differential operator $D_P$ on
$\widetilde\Xi_0$ (with respect to the $G_0$-invariant measure $dk_M\,dH$) is $D_{P^*}$.  The same holds for the operator $D_P$ on
$\Xi_0$ if $P\in I(\fk a)$.

\begin{theorem}\label{T:invdiff1}
\begin{enumerate}
\item The map $P\mapsto D_P$ is an isomorphism of $\mathcal S(\fk a)$
  onto $\mathbb D(\widetilde\Xi_0)$.
\item The map $P\mapsto D_P$ is an isomorphism of $I(\fk a)$
  onto $\mathbb D(\Xi_0)$.
\end{enumerate}
\end{theorem}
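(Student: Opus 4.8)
The plan is to first handle $\widetilde\Xi_0 = K/M \times \fk a$, where the structure is simplest, and then descend to $\Xi_0 = K/M \times_W \fk a$ by taking $W$-invariants. We have already observed that $P \mapsto D_P$ is an algebra homomorphism into $\mathbb D(\widetilde\Xi_0)$ (from \eqref{E:constcoeff1}) and into $\mathbb D(\Xi_0)$ when $P \in I(\fk a)$ (from \eqref{E:constcoeff2}), so in both parts it remains to prove \emph{injectivity} and \emph{surjectivity}. Injectivity is easy: if $D_P = 0$ then applying it to functions of the form $\varphi(kM,H) = \psi(H)$ shows $\partial(P) = 0$ on $\fk a$, hence $P = 0$; this works verbatim on $\widetilde\Xi_0$, and on $\Xi_0$ one lifts via \eqref{E:lift1} and uses that $\widetilde\varphi$ can be taken to depend only on the $\fk a$-variable near a point where $H$ lies in the open Weyl chamber. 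So the real content is surjectivity.

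For part (1), let $D \in \mathbb D(\widetilde\Xi_0)$. The key geometric input is that $G_0$ acts transitively on $\widetilde\Xi_0$ with isotropy $M \ltimes \fk q$ at $\widetilde\xi_0 = (eM, 0)$, so $\widetilde\Xi_0 \cong G_0/(M\ltimes\fk q)$ and $\mathbb D(\widetilde\Xi_0)$ is governed by the standard Helgason description: it is the image of the subalgebra of $\mathrm{Ad}_0(M\ltimes\fk q)$-invariants in the universal enveloping algebra $U(\fk g_0)$, modulo the left ideal generated by $\fk m + \fk q$. I would instead argue more concretely using the product structure. Fix $H_0$ in the open chamber $\fk a^+$; the group $A_0 := \exp(\fk a) \cong \fk a$ (the abelian translations inside $\fk p$) acts on $\widetilde\Xi_0$ and, restricted to the slice $\{eM\}\times\fk a$, acts simply transitively on that slice by \eqref{E:g0-action1} (since for $k = e$, $X \in \fk a$ we get $(eM, H_0 + X)$). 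An invariant operator $D$ is determined by the constant-coefficient-in-$H$ "symbol" it induces along this slice together with invariance; the stabilizer calculation shows that the transverse directions $dk_M$ are killed. More precisely: evaluate $D$ at the point $(eM, H_0)$ with $H_0 \in \fk a^+$ generic. Using $G_0$-invariance and the transitive action, $D$ at any point is conjugate to $D$ at $(eM,H_0)$; using invariance under the isotropy group $M\ltimes\fk q$ of the \emph{origin} (after translating), the component of $D$ involving $K/M$-derivatives must be absent, because $M$ acts on the tangent space to $K/M$ at $eM$ with no nonzero fixed vectors in the relevant jets — here I would invoke that $\fk q = \sum_{\alpha}\fk g_\alpha$ as an $M$-module pairs with the tangent space of $K/M$, and that the $\fk q$-translations act on $\widetilde\Xi_0$ trivially (they fix every point of the slice through $eM$ and act trivially on the $\fk a$-coordinate), forcing $D$ to differentiate only in $H$. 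This reduces $D$ to an operator of the form $\Phi \mapsto \sum_\beta c_\beta(kM,H)\,\partial(P_\beta)_H\Phi$; invariance under the $\fk p$-translations that move $H$ (the abelian part) forces the $c_\beta$ to be independent of $H$, and invariance under $K$ (which acts by left translation on $K/M$ and by rotating the $\fk a$-coordinate) forces $c_\beta$ to be constant on $K/M$ and the collection $\sum_\beta c_\beta P_\beta$ to be a single well-defined element $P \in \mathcal S(\fk a)$, whence $D = D_P$.

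For part (2), I would use the covering $\pi \colon \widetilde\Xi_0 \to \Xi_0$, which is $w$-to-one and $G_0$-equivariant, with deck group $W$ acting by $(kM,H) \mapsto (km_s^{-1}M, sH)$. Pull-back along $\pi$ identifies $\mathcal E(\Xi_0)$ with the $W$-invariant functions in $\mathcal E(\widetilde\Xi_0)$, and hence identifies $\mathbb D(\Xi_0)$ with the subalgebra of $\mathbb D(\widetilde\Xi_0)$ consisting of operators that preserve the $W$-invariant functions, equivalently that commute with the $W$-action. Under the isomorphism of part (1), $D_P$ commutes with the deck transformation associated to $s \in W$ precisely when $\partial(P)$ commutes with $H \mapsto sH$, i.e. precisely when $P$ is $W$-invariant, i.e. $P \in I(\fk a)$; this gives $\mathbb D(\Xi_0) \cong I(\fk a)$, and \eqref{E:lift1} confirms the map is the claimed $P \mapsto D_P$. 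The main obstacle is the surjectivity argument in part (1) — specifically, rigorously ruling out $K/M$-derivatives using $M\ltimes\fk q$-invariance; the cleanest route is probably the enveloping-algebra description of $\mathbb D(G_0/(M\ltimes\fk q))$ together with the fact that $\fk q$ acts trivially on $\widetilde\Xi_0$ (so that $U(\fk g_0)$ acts through $U(\fk g_0)/U(\fk g_0)\fk q \cong U(\fk k + \fk a)$ with $\fk a$ abelian), after which the $M$-invariance collapses the $\fk k$-part down to $U(\fk a) = \mathcal S(\fk a)$. I expect the bookkeeping with $\mathrm{Ad}_0$ in \eqref{E:ad0} to be the fiddly part, but no genuinely new idea is needed beyond what is sketched.
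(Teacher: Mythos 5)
Your reduction of part (2) to part (1) via the $w$-fold covering $\pi:\widetilde\Xi_0\to\Xi_0$ (identifying $\mathbb D(\Xi_0)$ with the $W$-equivariant elements of $\mathbb D(\widetilde\Xi_0)$) is sound and is in fact a cleaner route than the paper's, which simply reruns its local-coordinate argument with $H=M'\ltimes\mathfrak q$ in place of $H_0=M\ltimes\mathfrak q$. The problem is part (1), where you yourself locate the ``main obstacle'': your proposed mechanism for ruling out $K/M$-derivatives rests on two claims that are false. First, $\mathfrak q$ does \emph{not} act trivially on $\widetilde\Xi_0$: by \eqref{E:g0-action1}, the vector field induced by $X\in\mathfrak q$ at $(k_0M,H)$ is $(k_0^{-1}\cdot X)_{\mathfrak a}$ in the $\mathfrak a$-direction, which vanishes only on the slice $k_0\in M'$; so $U(\mathfrak g_0)$ does not factor through $U(\mathfrak g_0)/U(\mathfrak g_0)\mathfrak q$. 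Second, $M$ can act trivially on $T_{eM}(K/M)\cong\mathfrak l$ (e.g.\ $G=SL(2,\mathbb R)$, where $M=\{\pm I\}$; more generally $\mathrm{Ad}(M)$ is the identity on $\mathfrak l_\alpha$ when $m_\alpha=1$ and on $\mathfrak l_{2\alpha}$ when $m_{2\alpha}=1$), so ``$M$ has no nonzero fixed vectors in the relevant jets'' fails and $M$-invariance alone cannot kill the transverse derivatives. A further caveat: the pair $(G_0,M\ltimes\mathfrak q)$ is not reductive, so the ``standard Helgason description'' of $\mathbb D(G/H)$ as $\mathrm{Ad}(H)$-invariants in $S(\mathfrak m)$ for a complement $\mathfrak m$ is not directly available; one must compose with the projection $\sigma$ onto the chosen complement $\mathfrak l\oplus\mathfrak a$.

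The paper's proof turns the first point on its head: it is precisely the \emph{non}-triviality of the $\mathfrak q$-action that eliminates the $\mathfrak l$-directions. Writing $E^\alpha_i=X^\alpha_i+\theta(X^\alpha_i)\in\mathfrak l$ and $Y^\alpha_i=X^\alpha_i-\theta(X^\alpha_i)\in\mathfrak q$, one computes $\sigma\circ\mathrm{ad}_0(Y^\alpha_i)(E^\beta_j)=2A_\alpha\,\delta_{\alpha\beta}\delta_{ij}$ and $\sigma\circ\mathrm{ad}_0(Y^\alpha_i)|_{\mathfrak a}=0$; a polynomial bookkeeping argument in $S(\mathfrak l\oplus\mathfrak a)$ (Lemma \ref{T:invariantpoly1}) then shows that any $\sigma\circ\mathrm{Ad}_0(H_0)$-invariant element must lie in $S(\mathfrak a)$, after which the standard top-order-symbol induction gives surjectivity. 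So the missing ingredient in your sketch is exactly this computation; without it, the step ``forcing $D$ to differentiate only in $H$'' does not go through, and with your stated (false) premises it would in fact fail for $SL(2,\mathbb R)$.
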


This theorem is the flat analogue of  Theorem 2.2, Chapter II in
\cite{GASS}, which characterizes the algebra $\mathbb D(\Xi)$ of left
$G$-invariant differential operators on the horocycle space $\Xi$.
Our proof below is an adaptation of the proof of that theorem.

Let $H_0=M\ltimes\mathfrak q$, so that $\widetilde \Xi_0=G_0/H_0$.  We let 
$\mathfrak m$ denote the Lie algebra of $M$, and let
$\mathfrak l$ denote the orthogonal complement of $\mathfrak m$ in
$\mathfrak k$ with respect to $-B$.  Then we have the orthogonal decomposition
\begin{equation}\label{E:orthdecomp}
  \mathfrak g_0=\mathfrak g=(\mathfrak
m\oplus\mathfrak q)\oplus\mathfrak l\oplus\mathfrak a.
\end{equation}
Let $p:
g_0\mapsto g_0H_0$ be the coset map from $G_0$ onto $\widetilde
\Xi_0$, and let $\tau(g):g_0H_0\mapsto gg_0H_0$ be left translation by
$g\in G_0$ on $\widetilde\Xi_0$. Then $\tau(k,X)$ is given by
\eqref{E:g0-action1} and from that we have $p(k,X)=(kM,(k^{-1}\cdot
X)_\mathfrak a)$.  Now if $e_0=(e,0)$ is the identity
element of $G_0$, then \eqref{E:orthdecomp} shows that $dp_{e_0}$ is a linear bijection of $\mathfrak
l\oplus\mathfrak a$ onto the tangent space
$T_{\widetilde\xi_0}\widetilde\Xi_0$.  Let $\sigma$ be the orthogonal
projection of $\mathfrak g$ onto $\mathfrak l\oplus\mathfrak a$
according to the decomposition \eqref{E:orthdecomp}.
It is straightforward to show that
\begin{equation}\label{E:ad-commute}
  dp_{e_0}\circ\sigma\circ\text{Ad}_0\,(h)=d\tau(h)\circ dp_{e_0}\circ \sigma
\end{equation}
for all $h\in H_0$.  Thus the restriction of $dp_{e_0}$
to $\mathfrak l\oplus\mathfrak a$ intertwines the representations
$\sigma\circ\text{Ad}_0\,(h)$ and $d\tau(h)$ of $H_0$ on $\mathfrak
l\oplus\mathfrak a$ and on $T_{\widetilde\xi_0}\widetilde\Xi_0$, respectively.

While the pair $(G_0,H_0)$ is not reductive, it is nonetheless
possible to determine $\mathbb D(\widetilde\Xi_0)$ from the elements
of the (complexified) symmetric algebra $S(\mathfrak l\oplus\mathfrak
a)$ which are invariant under $\sigma\circ\text{Ad}\,(H_0)$.

\begin{lemma}\label{T:invariantpoly1}
$S(\mathfrak a)$ is precisely the algebra of elements in the symmetric algebra $S(\mathfrak
l\oplus\mathfrak a)$ which are invariant under
$\sigma\circ\text{Ad}_0\,(H_0)$.
\end{lemma}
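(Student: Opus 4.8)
The plan is to analyze the action of $\sigma\circ\mathrm{Ad}_0(H_0)$ on $\mathfrak l\oplus\mathfrak a$ and show it is large enough to force invariant polynomials into $S(\mathfrak a)$, while verifying the reverse inclusion is trivial. The easy direction first: since $H_0 = M\ltimes\mathfrak q$ centralizes $\mathfrak a$ pointwise (as $M$ centralizes $\mathfrak a$ and $\mathfrak q$ is abelian in $\mathfrak g_0$ and $[\mathfrak q,\mathfrak a]$, taken in $\mathfrak g_0$, vanishes since $\mathfrak p$ is abelian in $\mathfrak g_0$), formula \eqref{E:ad0} shows $\mathrm{Ad}_0(h)$ acts trivially on $\mathfrak a$ for $h\in H_0$; hence every element of $S(\mathfrak a)$ is fixed by $\sigma\circ\mathrm{Ad}_0(H_0)$. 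This gives $S(\mathfrak a)\subseteq S(\mathfrak l\oplus\mathfrak a)^{\sigma\mathrm{Ad}_0(H_0)}$.

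For the harder inclusion, I would compute $\sigma\circ\mathrm{Ad}_0(h)$ explicitly for $h=(m,Y)$ with $m\in M$, $Y\in\mathfrak q$, acting on a vector $T'+H'\in\mathfrak l\oplus\mathfrak a$. Using \eqref{E:ad0}, $\mathrm{Ad}_0(m,Y)(T'+H') = \mathrm{Ad}(m)T' + m\cdot H' - [\mathrm{Ad}(m)T', Y]$. Since $M$ normalizes $\mathfrak m$ and $\mathfrak l$ and centralizes $\mathfrak a$, we get $\mathrm{Ad}(m)T'\in\mathfrak l$, $m\cdot H' = H'\in\mathfrak a$, and the bracket $[\mathrm{Ad}(m)T',Y]\in[\mathfrak l,\mathfrak q]\subseteq\mathfrak q\oplus\mathfrak m$ (using $[\mathfrak k,\mathfrak p]\subseteq\mathfrak p$ and standard root-space bookkeeping for the $\mathfrak m$-component). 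Applying $\sigma$, which kills $\mathfrak m\oplus\mathfrak q$, yields
\begin{equation}\label{E:sigmaformula}
  \sigma\circ\mathrm{Ad}_0(m,Y)(T'+H') = \mathrm{Ad}(m)T' + H'.
\end{equation}
Thus the representation is \emph{independent of $Y$} and reduces to the action of $M$ on $\mathfrak l$ (by the restriction of $\mathrm{Ad}$) together with the trivial action on $\mathfrak a$. Consequently $S(\mathfrak l\oplus\mathfrak a)^{\sigma\mathrm{Ad}_0(H_0)} = S(\mathfrak l)^{M}\otimes S(\mathfrak a)$, and it remains to show $S(\mathfrak l)^M = \mathbb C$, i.e.\ that $M$ has no nonconstant invariants in $S(\mathfrak l)$.

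The key step — and the main obstacle — is therefore proving $S(\mathfrak l)^{\mathrm{Ad}(M)} = \mathbb C$. This is exactly the fact used in Helgason's proof of the analogous horocycle-space theorem, and it follows because the orbit $\mathrm{Ad}(M)\cdot L$ of any $L\in\mathfrak l$, $L\neq 0$, is not contained in any affine hyperplane missing the origin; equivalently $0$ lies in the interior of the convex hull of $\mathrm{Ad}(M)\cdot L$, so no nonconstant polynomial can be constant on $M$-orbits. I would establish this either by quoting the corresponding statement from \cite{GASS}, Chapter II (where $\mathfrak l$ plays the same role), or by a direct argument: one can pair against the subgroup generated by Weyl-group representatives $m_s\in M'\cap K$ reflecting individual root hyperplanes — more precisely, decompose $\mathfrak l$ into $M$-irreducibles coming from pairs $\{\alpha,-\alpha\}$ of restricted roots and observe that on each such piece $-1$ is realized by an element of $M$ or that the relevant $\mathrm{SU}(2)$- or $\mathrm{SO}(2)$-type subgroup acts without nonzero fixed vectors in positive degree. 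Once $S(\mathfrak l)^M=\mathbb C$ is in hand, combining with \eqref{E:sigmaformula} gives $S(\mathfrak l\oplus\mathfrak a)^{\sigma\mathrm{Ad}_0(H_0)} = S(\mathfrak a)$, completing the proof.
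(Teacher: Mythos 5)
Your proof has a fatal error at its central step. The claimed formula $\sigma\circ\mathrm{Ad}_0(m,Y)(T'+H')=\mathrm{Ad}(m)T'+H'$ is false, because the containment $[\mathfrak l,\mathfrak q]\subseteq\mathfrak q\oplus\mathfrak m$ that you use to justify it is wrong. Since $\mathfrak l\subseteq\mathfrak k$ and $\mathfrak q\subseteq\mathfrak p$, one has $[\mathfrak l,\mathfrak q]\subseteq[\mathfrak k,\mathfrak p]\subseteq\mathfrak p=\mathfrak a\oplus\mathfrak q$ (there is no $\mathfrak m$-component at all), and the $\mathfrak a$-component is generically nonzero: in the notation of the paper, $[E^\alpha_i,Y^\alpha_i]$ has $\mathfrak a$-component $-2A_\alpha\neq 0$ (this is exactly the computation recorded in \eqref{E:d-action}). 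Since $\sigma$ is the projection onto $\mathfrak l\oplus\mathfrak a$, it does \emph{not} kill this $\mathfrak a$-component, so $\sigma\circ\mathrm{Ad}_0(m,Y)$ genuinely depends on $Y$: the $\mathfrak q$-factor of $H_0$ acts by a nontrivial ``shear'' of $\mathfrak l$ into $\mathfrak a$. This shear is the entire content of the lemma; discarding it makes the statement unprovable.

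Indeed, your own conclusion shows the formula cannot be right: if the action really reduced to $\mathrm{Ad}(M)$ on $\mathfrak l$ plus the trivial action on $\mathfrak a$, the invariant algebra would be $S(\mathfrak l)^M\otimes S(\mathfrak a)$, and since $M$ is compact and preserves the inner product $-B$ on $\mathfrak l$, the quadratic form $T\mapsto -B(T,T)$ is a nonconstant $M$-invariant element of $S(\mathfrak l)$. So $S(\mathfrak l)^{\mathrm{Ad}(M)}=\mathbb C$ is false whenever $\mathfrak l\neq 0$, and the convex-hull argument you sketch only rules out invariants of degree one, not of higher degree. The paper's proof proceeds quite differently: it passes to the Lie algebra action $d(Y)=\sigma\circ\mathrm{ad}_0(Y)|_{\mathfrak l\oplus\mathfrak a}$ for $Y\in\mathfrak q$, computes $d(Y^\alpha_i)(E^\beta_j)=2A_\alpha\,\delta_{\alpha\beta}\,\delta_{ij}$, and then expands an invariant $Q$ as a polynomial in the $E^\beta_j$ with coefficients in $S(\mathfrak a)$; applying the derivations $d(Y^\alpha_i)$ and using $d(Y^\alpha_i)H=0$ forces every monomial of positive degree in the $E$'s to vanish. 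The $M$-invariance is only needed afterwards (Corollary \ref{T:invariantpoly2}) to cut $S(\mathfrak a)$ down to $I(\mathfrak a)$ for the full group $H=M'\ltimes\mathfrak q$. To repair your argument you would have to restore the $\mathfrak a$-component of the bracket and carry out essentially this computation.
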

\begin{proof}
Let $(m,X)\in H_0$. Then according to \eqref{E:ad0},  we have $\text{Ad}_0\,(m,X)\,(H)=H$ for any $H\in\mathfrak
a$.  This shows that $\mathfrak a$, and hence $S(\mathfrak a)$, is invariant
under $\text{Ad}_0\,(H_0)$ and thus also under $\sigma\circ\text{Ad}_0\,(H_0)$.

For the converse, let $\text{ad}_0$ denote the adjoint representation
on the Lie algebra $\mathfrak g_0$.  Then $\sigma\circ\text{ad}_0$ is
the representation of the Lie subalgebra $\mathfrak m\oplus\mathfrak q$ (of $\mathfrak g_0$) on $\mathfrak
l\oplus\mathfrak a$ corresponding to the representation
$\sigma\circ\text{Ad}_0$ of $H_0$ on the same space.  For convenience, for each $T+X\in\mathfrak
m\oplus\mathfrak q$, we let $d(T+X)$ denote the restriction of
$\sigma\circ\text{ad}_0(T+X)$ to $\mathfrak l\oplus\mathfrak a$. We
then extend $d(T+X)$ to a derivation of the symmetric algebra
$S(\mathfrak l\oplus\mathfrak a)$.

  We will
prove that if $Q\in S(\mathfrak l\oplus\mathfrak a)$ such that
\begin{equation}\label{E:annih}
  d(Y)\,Q=0\qquad\text{for all}\;Y\in\mathfrak q
\end{equation}
then $Q\in S(\mathfrak a)$.  This will then imply that the elements of
$S(\mathfrak l\oplus\mathfrak a)$ invariant under
$\sigma\circ\text{Ad}_0\,(\mathfrak q)$ belong to $S(\mathfrak a)$, which will
prove the lemma.

For each $\alpha\in\Sigma^+$, let
$X_1^\alpha,\cdots,X_{m_\alpha}^\alpha$ be an orthonormal basis of the
  restricted root space $\mathfrak g_\alpha$ with respect to the inner
  product $B_\theta(X,Y)=-B(X,\theta(Y))$ on $\mathfrak g$.  Then the
  vectors $E_i^\alpha=X_i^\alpha+\theta(X_i^\alpha)$ form an
  orthogonal basis (with respect to $-B$) of the
  subspace
$$
\mathfrak l_\alpha=\{T\in\mathfrak
k\,|\,\text{ad}\,(H)^2\,T=\alpha(H)^2\,T\;\text{for
  all}\;H\in\mathfrak a\}.
$$
of $\mathfrak k$. 
Likewise, the vectors $Y_i^\alpha=X_i^\alpha-\theta(X_i^\alpha)$ form
an orthogonal basis (with respect to $B$) of
$$
\mathfrak q_\alpha=\{X\in\mathfrak
p\,|\,\text{ad}\,(H)^2\,X=\alpha(H)^2\,X\;\text{for
  all}\;H\in\mathfrak a\}.
$$
Finally, we have $\mathfrak l=\oplus_{\alpha\in\Sigma^+}\mathfrak
l_\alpha$ and $\mathfrak q=\oplus_{\alpha\in\Sigma^+}\mathfrak
q_\alpha$.

If $\alpha\neq \beta$, it is easy to check that
$[Y_i^\alpha,E^\beta_j]_0=[Y_i^\alpha,E^\beta_j] \in\mathfrak q$ and therefore
$$
d(Y^\alpha_i)(E^\beta_j)=0\qquad (1\leq i\leq m_\alpha,\;1\leq j\leq m_\beta)
$$
On the other hand, for $1\leq i,j\leq m_\alpha$, 
\begin{align*}
  [Y^\alpha_i,E^\alpha_j]_0&=[Y^\alpha_i,E^\alpha_j]\\
&=([X^\alpha_i,X^\alpha_j]-\theta [X^\alpha_i,X^\alpha_j])+
([X^\alpha_i,\theta(X^\alpha_j)]-\theta [X^\alpha_i,\theta(X^\alpha_j)])
\end{align*}
The first quantity on the right above belongs to $\mathfrak q$.  If
$i\neq j$, then $[X^\alpha_i,\theta(X^\alpha_j)]\in\mathfrak m$, so
the second expression on the right above vanishes.  If $i=j$, then the
second quantity on the right equals $2A_\alpha$, where $A_\alpha$ is
the vector in $\mathfrak a$ such that $B(A_\alpha,H)=\alpha(H)$ for
all $H\in\mathfrak a$.

We conclude that
\begin{equation}\label{E:d-action}
  d(Y^\alpha_i)(E^\beta_j)=\begin{cases}
2A_\alpha&\qquad\text{if}\;\alpha=\beta\;\text{and}\;i=j\\
0&\qquad\text{otherwise}
\end{cases}
\end{equation}

Suppose now that $Q\in S(\mathfrak l\oplus\mathfrak a)$ such that
$d(Y)\,Q=0$ for all $X\in\mathfrak q$.  Fix any basis $H_1,\ldots,H_l$
of $\mathfrak a$.  Then $Q$ can be written uniquely as a polynomial in
the $E^\beta_j$ with coefficients in $S(\mathfrak a)$:
\begin{equation}\label{E:q-def1}
  Q=\sum_N \bigl(P_N(H_1,\ldots, H_l)\,\prod_{\beta\in\Sigma^+}
((E^\beta_1)^{n(\beta,1)}\cdots (E^\beta_{m_\beta})^{n(\beta,m_\beta)})\bigr)
\end{equation}
where the sum ranges over multiindices $N=(n(\beta,j))\;(1\leq j\leq
m_\beta,\,\beta\in\Sigma^+)$.  For convenience, let us put $E(\beta)^{N(\beta)}=(E^\beta_1)^{n(\beta,1)}\cdots
(E^\beta_{m_\beta})^{n(\beta,m_\beta)}$ and
$P_N=P_N(H_1,\ldots,H_l)$.

Since $d(Y^\alpha_i)\,H=0$ for all $H\in\mathfrak a$, \eqref{E:d-action} implies that
\begin{multline}\label{E:d-action1}
  d(Y^\alpha_i)\,Q=\\2A_\alpha\,\sum_{N\neq 0} n(\alpha,i)\,
 P_N\,\bigl(\prod_{\beta\neq \alpha}
E(\beta)^{N(\beta)}\,((E^\alpha_1)^{n(\alpha,1)}\cdots (E^\alpha_i)^{n(\alpha,i)-1}\cdots
(E^\alpha_{m_\alpha})^{n(\alpha,m_\alpha)})\bigr)
\end{multline}
Since the right hand side equals $0$, the coefficient
of $A_\alpha$ above must equal $0$.  This coefficient is therefore 
an empty sum.  Since $Y^\alpha_i$ is arbitrary, we conclude that there is only one summand in
\eqref{E:q-def1}, the one corresponding to $N=0$.   This shows that $Q\in S(\mathfrak a)$.
\end{proof}

Let us recall that, by definition, $H=M'\ltimes \mathfrak q$.
  
\begin{corollary}\label{T:invariantpoly2} The algebra of elements of
  $S(\mathfrak l\oplus\mathfrak a)$ invariant under
  $\sigma\circ\text{Ad}_0 (H)$ is $I(\mathfrak a)$.
\end{corollary}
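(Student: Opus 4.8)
The plan is to derive Corollary~\ref{T:invariantpoly2} from Lemma~\ref{T:invariantpoly1} together with an explicit description of the residual action of $M'/M=W$.

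First, since $M\subset M'$, we have $H_0=M\ltimes\mathfrak q\subset M'\ltimes\mathfrak q=H$. Hence any $Q\in S(\mathfrak l\oplus\mathfrak a)$ invariant under $\sigma\circ\text{Ad}_0(H)$ is, in particular, invariant under $\sigma\circ\text{Ad}_0(H_0)$, so Lemma~\ref{T:invariantpoly1} already forces $Q\in S(\mathfrak a)$. It therefore remains only to identify, inside $S(\mathfrak a)$, the elements fixed by all of $H$.

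Second, I would compute the action on $\mathfrak a$ of an element $(m',X)\in H$ with $m'\in M'$ and $X\in\mathfrak q$, using \eqref{E:ad0}. Regarding $H\in\mathfrak a$ as an element of $\mathfrak p$, its $\mathfrak k$-component vanishes, so the bracket term in \eqref{E:ad0} drops out and $\text{Ad}_0(m',X)(H)=\text{Ad}(m')(H)$, which lies in $\mathfrak a$ because $m'$ normalizes $A$. Thus $\sigma\circ\text{Ad}_0(m',X)$ carries $\mathfrak a$ into $\mathfrak a$, acting there as $\text{Ad}(m')|_{\mathfrak a}$ and independently of the $\mathfrak q$-component $X$; consequently the induced automorphism of $S(\mathfrak l\oplus\mathfrak a)$ preserves the subalgebra $S(\mathfrak a)$ and restricts on it to the algebra automorphism induced by $\text{Ad}(m')|_{\mathfrak a}$.

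Finally, since $\text{Ad}(M')$ restricted to $\mathfrak a$ realizes the Weyl group $W=M'/M$, an element $Q\in S(\mathfrak a)$ is fixed by all of $\sigma\circ\text{Ad}_0(H)$ precisely when it is $W$-invariant, that is, when $Q\in I(\mathfrak a)$; combined with the first step this yields the asserted equality. I do not expect a genuine obstacle: the substance is entirely contained in Lemma~\ref{T:invariantpoly1}, and the only point needing a moment's care is checking that the correction term $-[\text{Ad}\,k\,(T'),X]$ in \eqref{E:ad0} contributes nothing when the argument lies in $\mathfrak a\subset\mathfrak p$, so that the translation part in $\mathfrak q$ is invisible and the residual action is exactly the Weyl group action on $S(\mathfrak a)$.
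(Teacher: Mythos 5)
Your argument is correct and is exactly the route the paper intends: its one-line proof ("clear from Lemma \ref{T:invariantpoly1} and \eqref{E:ad0}") is precisely your two steps — reduce to $S(\mathfrak a)$ via the lemma since $H_0\subset H$, then read off from \eqref{E:ad0} that the residual action of $H$ on $\mathfrak a$ is the Weyl group action (the bracket term vanishing because elements of $\mathfrak a$ have no $\mathfrak k$-component). You have simply written out the details the paper leaves implicit.
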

\begin{proof}
This is clear from Lemma \ref{T:invariantpoly1} and \eqref{E:ad0}.
\end{proof}

The rest of the proof of the first assertion in Theorem \ref{T:invdiff1} proceeds exactly as
in Helgason's book (\cite{GASS}, Theorem 2.2, Chapter II).  For completeness,
we include it.  It will be convenient here to denote the basis $\{E^\alpha_i\}$ of $\mathfrak l$
by $H_{l+1},\ldots,H_{l+r}$.  Then for some $\delta>0$, the inverse of the map
$$
(t_1,\ldots,t_{l+r})\mapsto \exp(t_1H_1+\cdots+t_{l+r}H_{l+r})\,H_0\qquad(\sum t_i^2<\delta^2)
$$
is a chart on a neighborhood of the identity coset
$eH_0=\widetilde\xi_0$ in $\widetilde\Xi_0$.  Suppose that
$D\in\mathbb D(\widetilde\Xi_0)$.  Then there is a unique polynomial $P$ in
$l+r$ variables such that
\begin{equation}\label{E:local1}
  D\varphi(\widetilde\xi_0)=\left.P\left(\frac{\partial}{\partial
    t_1},\ldots,\frac{\partial}{\partial t_{l+r}}\right)\,\varphi(\exp (\sum t_iH_i)\cdot\widetilde\xi_0)\right|_{(t)=(0)}
\end{equation}
for all $\varphi\in\mathcal E(\widetilde\Xi_0)$.  Now for each $h\in
H_0$, there is a diffeomeorphism $(t_1,\ldots,t_{l+r})\mapsto
(s_1,\ldots,s_{l+r})$ on neighborhoods of $0\in\mathbb R^{l+r}$ such
that
$$
\tau(h)\,\exp\left(\sum t_iH_i\right)\,H_0=\exp \left(\sum s_jH_j\right)\,H_0.
$$
For convenience, let us put $\varphi(\exp(\sum
t_iH_i)\,H_0)=\varphi(t_1,\ldots,t_{l+r})$.  Since
$D(\varphi)(\widetilde\xi_0)=D(\varphi^{\tau(h)})(\widetilde\xi_0)$,
we have
\begin{equation}\label{E:p*}
  \left.P\left(\frac{\partial}{\partial
    t_1},\ldots,\frac{\partial}{\partial
    t_{l+r}}\right)\,\left(\varphi(t_1,\ldots,t_{l+r})-\varphi(s_1,\ldots,s_{l+r})\right)\right|_{(t)=(0)}
\end{equation}
Let $P^*$ denote the sum of the highest order terms in $P$, and write 
$$
P^*=\sum_{|J|=N} a_J 
\left(\frac{\partial}{\partial
  t_1}\right)^{j_1}\circ\cdots\circ\left(\frac{\partial}{\partial t_{l+r}}\right)^{j_{l+r}}.
$$
If we fix a multiindex $J$ of order $N$ and let
$\varphi(t_1,\ldots,t_{l+r})=t^J=t_1^{j_1}\ldots t_{l+r}^{j_{l+r}}$ near the origin,
then \eqref{E:p*} shows that
\begin{equation}\label{E:p*2}
  a_J=\sum_{|I|=N} R_{JI}\,a_I
\end{equation}
where $(R_{JI})$ is the matrix of the linear operator on the vector
space of homogeneous degree $N$ polynomial functions on $\mathbb
R^{l+r}$ extending the operator on $\mathbb R^{l+r}$ whose matrix is
the Jacobian matrix $(\partial s_j/\partial t_i)$ at $(t)=(0)$.  But
this Jacobian matrix is also the matrix of $\sigma\circ
\text{Ad}_0\,(h)$ with respect to the basis $\{H_i\}$ of $\mathfrak
l\oplus\mathfrak a$.  Equation \eqref{E:p*2} then shows that $\sum_{|J|=N}
a_J H^J$ is invariant under $\sigma\circ
\text{Ad}_0\,(h)$.  Thus by Lemma \ref{T:invariantpoly1}, we conclude that
$P^*=P^*(\partial/\partial t_1,\ldots,\partial/\partial t_l)$.  Since
$D$ is $G_0$-invariant, we see that
$$
D\varphi(g_0\cdot\widetilde\xi_0)=P^*\left(\frac{\partial}{\partial
    t_1},\ldots,\frac{\partial}{\partial
    t_l}\right)\,\varphi(g_0\,\exp(\sum_{i=1}^l
t_iH_i)\cdot\widetilde\xi_0)\biggl|_{(0)}+\;
\text{lower order terms}
$$
so that  $D-D_{P^*}$ is an element of $\mathbb D(\widetilde\Xi_0)$
whose order is less than the order of $D$.  A
simple induction on the order then completes the proof of the first
assertion of Theorem \ref{T:invdiff1}.

For the second assertion, suppose that $D\in\mathbb D(\Xi_0)$.  Then
there exists a polynomial $P$ such that \eqref{E:local1} holds for all functions $\varphi\in\mathcal E(\Xi_0)$, with
$\xi_0$ replacing $\widetilde\xi_0$.  With this substitution, the rest of the proof above
carries over, with $h\in H_0$ replaced by $h\in H=M'\ltimes\mathfrak
q$, and with $P^*(H_1,\ldots,H_l)$ $M'$-invariant by Corollary
\ref{T:invariantpoly2}.

\section{The Space of Joint Eigendistributions}
Suppose that $\Psi\in\mathcal D'(\Xi_0)$ is an eigendistribution of
$\mathbb D(\Xi_0)$.  Then according to Lemma 3.11, Chapter III of
\cite{GGA}, there exists $\lambda\in\fk a_c^*$ (unique up to $W$-orbit) such
that
\begin{equation}\label{E:eigendist1}
  D_P\Psi=P(i\lambda)\,\Psi
\end{equation}
for all $P\in I(\fk a)$.  We let $\mathcal D'_\lambda(\Xi_0)$ denote
the vector space consisting of all $\Psi\in\mathcal D'(\Xi_0)$
satisfying \eqref{E:eigendist1}.

Any eigendistribution $\Psi\in\mathcal D'(\widetilde\Xi_0)$ of
$\mathbb D(\widetilde\Xi_0)$ likewise corresponds to a unique $\lambda\in\fk
a_c^*$ satisfying \eqref{E:eigendist1} for all $P\in\mathcal S(\fk
a)$. For such $\lambda$, we denote the vector space of all such
distributions by $\mathcal D'_\lambda(\widetilde\Xi_0)$.

The following can be proved in a manner analogous to the proof of
Proposition 4.4, Chapter II in \cite{GASS}.

\begin{proposition}\label{T:eigendist2}
Let $\Psi\in\mathcal D'_\lambda(\widetilde\Xi_0)$.  Then there is a unique
$S\in\mathcal D'(K/M)$ such that
\begin{equation}\label{E:eigendist3}
  \Psi(\varphi)=\int_{K/M}^{}\int_{\fk a}^{}\varphi(kM,H)\,e^{i\lambda(H)}\,dH\,dS(kM).
\end{equation}
Conversely, if $S\in\mathcal D'(K/M)$, then the distribution $\Psi$ on
$\widetilde\Xi_0$ defined above belongs to $\mathcal
D'_\lambda(\widetilde\Xi_0)$. 
\end{proposition}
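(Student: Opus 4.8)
The plan is to prove Proposition~\ref{T:eigendist2} by reducing it to the classical statement that, on the vector space $\fk a$, the only tempered (indeed, distributional) joint eigendistributions of the constant-coefficient operators $\partial(P)$, $P\in S(\fk a)$, with eigenvalue $P(i\lambda)$, are the scalar multiples of the exponential $H\mapsto e^{i\lambda(H)}$. Concretely, given $\Psi\in\mathcal D'_\lambda(\widetilde\Xi_0)$, one first tests $\Psi$ against functions of the separated form $\varphi(kM,H)=u(kM)\,v(H)$ with $u\in\mathcal D(K/M)$ and $v\in\mathcal D(\fk a)$: for fixed $u$, the functional $v\mapsto \Psi(u\otimes v)$ is a distribution $\Psi_u\in\mathcal D'(\fk a)$, and the eigenequation $D_P\Psi=P(i\lambda)\Psi$ from \eqref{E:eigendist1}, together with the defining formula \eqref{E:constcoeff1} for $D_P$ on $\widetilde\Xi_0$, shows that $\partial(P)\Psi_u = P(i\lambda)\,\Psi_u$ for all $P\in S(\fk a)$. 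Applying this with $P$ ranging over a basis of degree-one elements of $\fk a$, one gets $(\partial(H_i)-i\lambda(H_i))\Psi_u=0$ for each coordinate, hence $\Psi_u = c(u)\,e^{i\lambda(\cdot)}$ for a scalar $c(u)$ depending linearly and continuously on $u$; this defines $S\in\mathcal D'(K/M)$ by $S(u)=c(u)$, and then \eqref{E:eigendist3} holds on the dense subspace of $\mathcal D(\widetilde\Xi_0)$ spanned by separated functions, so by continuity it holds on all of $\mathcal D(\widetilde\Xi_0)$.

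The uniqueness of $S$ is immediate: if the right-hand side of \eqref{E:eigendist3} vanishes for all $\varphi$, then taking $\varphi(kM,H)=u(kM)\,v(H)$ gives $S(u)\int_{\fk a}v(H)e^{i\lambda(H)}\,dH=0$ for all $v$, and choosing $v$ so that the integral is nonzero forces $S(u)=0$ for all $u\in\mathcal D(K/M)$. For the converse, one simply checks that a distribution $\Psi$ defined by \eqref{E:eigendist3} from an arbitrary $S\in\mathcal D'(K/M)$ is indeed an eigendistribution: applying $D_P$ under the integral sign and using $\partial(P)_H\,e^{i\lambda(H)}=P(i\lambda)\,e^{i\lambda(H)}$ gives $D_P\Psi=P(i\lambda)\Psi$ directly, with the differentiation-under-the-integral justified because $S$ acts only in the $kM$ variable while $\partial(P)$ acts only in $H$.

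The main technical obstacle is the step of passing from the separated test functions to general $\varphi\in\mathcal D(\widetilde\Xi_0)$, i.e.\ verifying that the map $u\mapsto c(u)$ really is a distribution on $K/M$ (continuity with respect to the $\mathcal D(K/M)$ topology) and that both sides of \eqref{E:eigendist3} agree as continuous functionals on $\mathcal D(\widetilde\Xi_0)=\mathcal D(K/M\times\fk a)$. This is handled exactly as in the proof of Proposition~4.4, Chapter~II of \cite{GASS}: one uses the nuclearity of these spaces, so that $\mathcal D(K/M\times\fk a)$ is (the completion of) the algebraic tensor product $\mathcal D(K/M)\otimes\mathcal D(\fk a)$, together with the Schwartz kernel theorem to identify $\Psi$ with an element of $\mathcal D'(K/M)\,\widehat\otimes\,\mathcal D'(\fk a)$; the eigenequation then pins down the $\fk a$-factor to be $e^{i\lambda(\cdot)}$ and leaves a well-defined kernel $S\in\mathcal D'(K/M)$. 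Everything else is a routine transcription, since the only structural input is the explicit form \eqref{E:constcoeff1} of the invariant operators $D_P$ on $\widetilde\Xi_0$ and the product structure $\widetilde\Xi_0=K/M\times\fk a$.
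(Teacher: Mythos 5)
Your proposal is correct, and it is essentially the argument the paper has in mind: the paper gives no proof of its own but defers to the adaptation of Proposition~4.4, Chapter~II of \cite{GASS}, which is exactly what you carry out (restrict to separated test functions, use the eigenequation for degree-one elements of $S(\fk a)$ to pin the $\fk a$-factor to $e^{i\lambda(\cdot)}$, recover $S$ by continuity and density, and verify the converse by moving $\partial(Q^*)$ onto the exponential).
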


If $F\in\mathcal E(\widetilde\Xi_0)$, we define $F_\pi\in\mathcal
E(\Xi_0)$ by
$$
F_\pi[kM,H]=\frac{1}{w}\sum_{s\in W} F(km_s^{-1}M, m_s\cdot H)
$$
Then the pullback $\widetilde\Phi$ of a distribution $\Phi\in\mathcal
D'(\Xi_0)$ is defined by
\begin{equation}\label{E:pullback}
  \widetilde\Phi(F)=\Phi(F_\pi)\qquad\qquad(F\in\mathcal D(\widetilde\Xi_0))
\end{equation}
Note that
$$
\widetilde\Phi(\widetilde\varphi)=\Phi(\varphi)
$$
for all $\Phi\in\mathcal D'(\Xi_0),\;\varphi\in\mathcal D(\Xi_0)$.
Let $P\in I(\fk a)$ and  $\Phi\in\mathcal D'(\Xi_0)$.  Then 
it is easy to see from \eqref{E:constcoeff1} and \eqref{E:constcoeff2}
and the fact that $D_P(F_\pi)=(D_P F)_\pi$,
that, in analogy with \eqref{E:lift1}, we have
\begin{equation}\label{E:lift2}
  \left(D_P\Phi\right)^{\widetilde{}}=D_P\widetilde\Phi.
\end{equation}

Since $\mathbb D(\Xi_0)$ is smaller than $\mathbb
D(\widetilde\Xi_0)$, it is not true that $\widetilde\Phi$ belongs to
$\mathcal D'_\lambda(\widetilde\Xi_0)$ whenever $\Phi\in\mathcal
D'_\lambda(\Xi_0)$.  (It is easy to construct smooth counterexamples.)
Nonetheless, we can obtain a result similar to Proposition
\ref{T:eigendist2} above, as follows.

Suppose that $\Phi\in\mathcal D'_\lambda(\Xi_0)$.  Then from \eqref{E:lift2}
$\widetilde\Phi$ satisfies
\begin{equation}\label{E:eigenfcn1}
  D_P(\widetilde\Phi)=P(i\lambda)\,\widetilde\Phi\qquad\qquad(P\in I(\fk a))
\end{equation}

Now for functions $\alpha\in\mathcal D(\fk a)$ and
$\beta\in\mathcal E(K/M)$, let $\beta\otimes\alpha$ be the function
$\beta(kM)\,\alpha(H)$ on $\widetilde\Xi_0=K/M\times\fk a$.  The
linear span of such functions is dense in $\mathcal
D(\widetilde\Xi_0)$.

If we fix $\beta\in\mathcal E(K/M)$, the map 
\begin{equation}\label{E:restr1}
  T_\beta:\alpha\in\mathcal D(\fk a)\to \widetilde\Phi(\beta\otimes\alpha)
\end{equation}
is a distribution in $\fk a$; in fact, we see from \eqref{E:eigenfcn1}
that $T_\beta$ is an eigendistribution
of the algebra $I(\fk a)$.  Since this algebra contains elliptic
elements, it follows that $T_\beta$ is in fact a smooth eigenfunction of
$I(\fk a)$, with
\begin{equation}\label{E:restr2}
  (\partial(P)\,T_\beta)(H)=P(i\lambda)\,T_\beta (H)
\end{equation}
for all $P\in I(\fk a)$.  The space of such eigenfunctions is
described in \cite{GGA}, Chapter III, Theorem 3.13.  Let $W_{\lambda}$
denote the subgroup of $W$ consisting of those elements fixing
$\lambda$, let $I_{\lambda}(\fk a)$ be the subalgebra of
$W_{\lambda}$-invariant elements of $\mathcal S(\fk a)$, and let
$H_{\lambda}$ be the vector space of $W_{\lambda}$-harmonic polynomial functions
on $\fk a$.

Then for each element $s\lambda$ in the orbit $W\cdot \lambda$, there exists
a unique polynomial $P_{s\lambda}(\beta)(H)$ in $H_{s\lambda}$, with
coefficients depending on $\beta$, such that
\begin{equation}\label{E:restr3}
  T_\beta(H)=\sum_{s\lambda\in W\cdot \lambda} P_{s\lambda}(\beta)(H)\,e^{is\lambda(H)}
\end{equation}
for all $H\in\fk a$.  When $\lambda$ is regular, the
$P_{s\lambda}(\beta)$ are just constants (depending, of course, on $\beta$).

For fixed $H\in\fk a$, the map $\beta\in\mathcal E(K/M)\to
P_{s\lambda}(\beta)(H)$ is continuous, and from this it is not hard to see that the coefficients of the polynomials
$P_{s\lambda}(\beta)(H)$ are distributions on $K/M$.  
More precisely,
for each $s\lambda$, fix a basis $P_{s\lambda,j}(H)$ ($1\leq j\leq
r=|W_{\lambda}|$) of $H_{s\lambda}$.  Then 
\begin{equation}\label{E:restr4}
  P_{s\lambda}(\beta)(H)=\sum_{j=1}^{r} S_{s\lambda,j}(\beta)\,P_{s\lambda,j}(H)
\end{equation}
Each coefficient $S_{s\lambda,j}$ is a distribution on $K/M$ uniquely
determined, of course, by the choice of the basis $\{P_{s\lambda,j}\}$.  Hence,
by \eqref{E:restr1} \eqref{E:restr3}, and \eqref{E:restr4}, we see that
\begin{equation}\label{E:phi1}
  \widetilde\Phi(F)=\sum_{s\lambda\in
    W\cdot\lambda}\,\sum_{j=1}^{r}
\int_{K/M}^{}\int_{\fk a}^{}P_{s\lambda,j}(H)\,F(kM,H)\,e^{is\lambda(H)}\,dH\,dS_{s\lambda,j}(kM)
\end{equation}
for all $F\in\mathcal D(\widetilde\Xi_0)$ of the form
$\beta\otimes\alpha$.  Since the $\beta\otimes\alpha$ span a dense
subspace of $\mathcal D(\widetilde\Xi_0)$, formula \eqref{E:phi1} holds for
all $F\in\mathcal D(\widetilde\Xi_0)$.

When $\lambda$ is regular, each $H_{s\lambda}=\mathbb C$ (so
we can take $1$ as its basis), and the formula above reduces to
\begin{equation}\label{E:phi2}
  \widetilde\Phi(F)=\sum_{s\in W}\int_{K/M}^{}\int_{\fk a}^{} F(kM,H)\,e^{is\lambda(H)}\,dH\,dS_{s\lambda}(kM)
\end{equation}
for all $F\in\mathcal D(\widetilde\Xi_0)$.

We now proceed to obtain a more explicit characterization of the eigendistribution $\Phi\in\mathcal
D'_\lambda(\Xi_0)$.  For this, we note that expression
\eqref{E:restr4} shows that $P_{s\lambda}$ can be considered as an
element of $\mathcal D'(K/M)\otimes H_{s\lambda}$, with
$P_{s\lambda}=\sum_{j=1}^m S_{s\lambda,j}\otimes P_{s\lambda,j}$, so
that \eqref{E:phi1} becomes
\begin{equation}\label{E:phi3}
  \widetilde\Phi(F)=\sum_{s\lambda\in W\cdot \lambda} \int_{\fk
    a}^{}\int_{K/M}^{}F(kM,H)\,e^{is\lambda(H)}\,dP_{s\lambda}(kM)(H)\,dH
\end{equation}
We observe that by \eqref{E:restr4}, each $P_{s\lambda}$ is uniquely determined by $\Phi$.

Now the Weyl group $W$ acts (freely) on both $K/M$ and on
$\widetilde\Xi_0=K/M\times\fk a$ by $s\cdot kM=km_s^{-1}M$ and
$s\cdot(kM,H)=(km_s^{-1}M,sH)$.  Thus for each $t\in W$,
\begin{align}
  \widetilde\Phi(F)&=\widetilde\Phi^t(F)\nonumber\\
&=\sum_{s\lambda\in W\cdot \lambda} \int_{\fk
    a}^{}\int_{K/M}^{}F(km_t^{-1}M,t\cdot
  H)\,e^{is\lambda(H)}\,dP_{s\lambda}(kM)(H)\,dH\nonumber\\
&=\sum_{s\lambda\in W\cdot \lambda} \int_{\fk
    a}^{}\int_{K/M}^{}F(kM,t\cdot
  H)\,e^{is\lambda(H)}\,dP_{s\lambda}^t(kM)(H)\,dH\label{E:phi4}
\end{align}
where we have put $P_{s\lambda}^t=\sum_j S_{s\lambda,j}^t\otimes
P_{s\lambda,j}$.  Then right hand side of \eqref{E:phi4} then equals
\begin{equation}\label{E:phi5}
  \sum_{s\lambda\in W\cdot \lambda} \int_{\fk
    a}^{}\int_{K/M}^{}F(kM,H)\,e^{its\lambda(H)}\,t\cdot dP_{s\lambda}^t(kM)(H)\,dH
\end{equation}
where now $t\cdot P_{s\lambda}^t=\sum_j S_{s\lambda,j}^t\otimes
(t\cdot P_{s\lambda,j})$, an element of $\mathcal D'(K/M)\otimes
H_{ts\lambda}$.  By the uniqueness of the $P_{s\lambda}$, it follows that
$$
  P_{ts\lambda}=t\cdot P_{s\lambda}^t
$$
for all $s,t\in W$.  In particular,
$$
P_{s\lambda}=s\cdot P_\lambda^s \qquad\qquad (s\in W)
$$

Hence, for any $\varphi\in\mathcal D(\Xi_0)$, we have
\begin{align}
  \Phi(\varphi)&=\widetilde\Phi(\widetilde\varphi)\nonumber\\
&=\sum_{s\lambda\in W\cdot \lambda} \int_{\fk
    a}^{}\int_{K/M}^{}\widetilde\varphi(kM,H)\,e^{is\lambda(H)}\,s\cdot dP_{\lambda}^s(kM)(H)\,dH\nonumber\\
&=\sum_{s\lambda\in W\cdot \lambda} \int_{\fk
    a}^{}\int_{K/M}^{}\widetilde\varphi(km_s^{-1}M,s\cdot
  H)\,e^{i\lambda(H)}\,dP_{\lambda}(kM)(H)\,dH\nonumber\\
&=|W\cdot\lambda|\,\int_{\fk
    a}^{}\int_{K/M}^{}\widetilde\varphi(kM,
  H)\,e^{i\lambda(H)}\,dP_{\lambda}(kM)(H)\,dH\nonumber\\
&=|W\cdot\lambda|\,\int_{\fk
    a}^{}\int_{K/M}^{}\varphi[kM,
  H]\,e^{i\lambda(H)}\,dP_{\lambda}(kM)(H)\,dH\label{E:phi-value}
\end{align}

We are led to the following theorem.

\begin{theorem}\label{T:geneigendist}
Suppose that $\lambda\in\fk a^*_c$ and that $\Phi\in\mathcal
D'_\lambda(\Xi_0)$.  Then there exists a unique element
$Q_\lambda\in\mathcal D'(K/M)\otimes H_\lambda$ such that
\begin{equation}\label{E:phi6}
  \Phi(\varphi)=\int_{\fk
    a}^{}\int_{K/M}^{}\varphi[kM,
  H]\,e^{i\lambda(H)}\,dQ_{\lambda}(kM)(H)\,dH
\end{equation}
Conversely, given any element $Q_\lambda\in\mathcal D'(K/M)\otimes
H_\lambda$, the expression \eqref{E:phi6} defines a distribution $\Phi\in\mathcal D'_\lambda(\Xi_0)$.
\end{theorem}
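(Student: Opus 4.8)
The plan is to derive Theorem \ref{T:geneigendist} directly from the computation culminating in \eqref{E:phi-value}, together with the converse direction which essentially reverses that computation. For the forward direction, given $\Phi\in\mathcal D'_\lambda(\Xi_0)$, I first pass to the pullback $\widetilde\Phi\in\mathcal D'(\widetilde\Xi_0)$ defined by \eqref{E:pullback}. By \eqref{E:lift2}, $\widetilde\Phi$ satisfies the eigenfunction equation \eqref{E:eigenfcn1}, so the analysis via the restriction distributions $T_\beta$ and Helgason's description (\cite{GGA}, Ch.\ III, Thm.\ 3.13) of $I(\fk a)$-eigenfunctions applies, yielding the representation \eqref{E:phi1}, equivalently \eqref{E:phi3}, in terms of the uniquely determined $P_{s\lambda}\in\mathcal D'(K/M)\otimes H_{s\lambda}$. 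Then I exploit the $W$-equivariance: since $\widetilde\varphi=\widetilde\varphi^{\,t}$ for all $t\in W$ (the pullback of a function on $\Xi_0$ is $W$-invariant on $\widetilde\Xi_0$), the chain \eqref{E:phi4}--\eqref{E:phi5} forces $P_{ts\lambda}=t\cdot P_{s\lambda}^t$, hence $P_{s\lambda}=s\cdot P_\lambda^s$ for every $s$. Substituting this into \eqref{E:phi1} and changing variables $(kM,H)\mapsto(km_s^{-1}M,sH)$ collapses all $|W\cdot\lambda|$ terms to a single one, giving \eqref{E:phi-value}. Setting $Q_\lambda=|W\cdot\lambda|\,P_\lambda\in\mathcal D'(K/M)\otimes H_\lambda$ then yields \eqref{E:phi6}.

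For \emph{uniqueness} of $Q_\lambda$, suppose \eqref{E:phi6} holds with $Q_\lambda$ giving the zero distribution on $\Xi_0$. Testing against functions of the form $\varphi=\psi_\pi$ where $\psi=\beta\otimes\alpha$ on $\widetilde\Xi_0$ (so that $\widetilde\varphi = \psi_\pi\circ\pi$ is the $W$-average of $\psi$), and using the exponential-polynomial independence of the $e^{is\lambda}P_{s\lambda,j}(H)$ over distinct $W$-orbit points together with the linear independence of the chosen harmonic basis $\{P_{\lambda,j}\}$, one recovers each scalar pairing $\langle S_{\lambda,j},\beta\rangle$ from $\Phi$; since $\beta\in\mathcal E(K/M)$ is arbitrary this forces every $S_{\lambda,j}=0$, i.e.\ $Q_\lambda=0$. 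Concretely: the integral in $H$ over $\fk a$ of $P_{\lambda,j}(H)\,e^{i\lambda(H)}\alpha(H)\,dH$ can be made to isolate a single harmonic component by choosing $\alpha$ appropriately (this is where one uses that $H_\lambda$ is finite-dimensional and the pairing $H_\lambda\times H_\lambda\to\mathbb C$, $(p,q)\mapsto\int p\,\overline{q}\,e^{-|H|^2}dH$ or a similar device, is nondegenerate — any convenient nondegenerate pairing realized by test functions works).

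For the \emph{converse}, given $Q_\lambda\in\mathcal D'(K/M)\otimes H_\lambda$, I must check that $\Phi$ defined by \eqref{E:phi6} is a well-defined distribution on $\Xi_0$ and lies in $\mathcal D'_\lambda(\Xi_0)$. Well-definedness on $\Xi_0$ (as opposed to merely on $\widetilde\Xi_0$) follows because the right side of \eqref{E:phi6} only involves $\varphi[kM,H]$, which by definition depends only on the class of $(kM,H)$ modulo the $W$-action $(kM,H)\mapsto(km_s^{-1}M,sH)$ — but one must verify the integrand, namely $dQ_\lambda(kM)(H)\,e^{i\lambda(H)}$ paired against $\varphi[kM,H]$, transforms correctly; this is automatic since we integrate against the $\varphi[kM,H]$ which is genuinely a function on $\Xi_0$ and the measure $dk_M\,dH$ on $K/M\times\fk a$ is $W$-invariant. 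Continuity in $\varphi\in\mathcal D(\Xi_0)$ is routine from the continuity of $Q_\lambda$. Finally, applying $D_P$ for $P\in I(\fk a)$ and using \eqref{E:constcoeff2} together with $\partial(P)_H\,e^{i\lambda(H)}P_{\lambda,j}(H)$: since $P$ is $W$-invariant and $P_{\lambda,j}$ is $W_\lambda$-harmonic, the standard fact (again \cite{GGA}, Ch.\ III) that $\partial(P)(e^{i\lambda}h)=P(i\lambda)\,e^{i\lambda}h$ for $h\in H_\lambda$ gives $D_P\Phi=P(i\lambda)\Phi$, as required. The main obstacle I anticipate is the uniqueness argument: separating the harmonic-polynomial coefficients cleanly requires the mild but genuine input that evaluation/integration against test functions $\alpha\in\mathcal D(\fk a)$ distinguishes the finitely many linearly independent functions $H\mapsto P_{\lambda,j}(H)e^{i\lambda(H)}$, and organizing this so it reads cleanly — rather than the routine verifications in the converse.
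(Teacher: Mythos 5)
Your existence and converse arguments are essentially identical to the paper's. The paper proves the forward direction by setting $Q_\lambda=|W\cdot\lambda|\,P_\lambda$ in \eqref{E:phi-value}, which is exactly the chain you describe (pullback, restriction distributions $T_\beta$, the exponential--polynomial decomposition from \cite{GGA}, the $W$-equivariance $P_{s\lambda}=s\cdot P_\lambda^s$, collapse of the orbit sum), and it verifies the converse, as you do, by noting that $P_j(H)\,e^{i\lambda(H)}$ lies in the joint eigenspace $\mathcal E_{i\lambda}(\fk a)$ and integrating by parts. Your well-definedness worry is resolved just as you suspect: \eqref{E:phi6} is shorthand for an integral of $\widetilde\varphi$ over $K/M\times\fk a$ (the paper's Remark (2)), so there is nothing to check.

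The one place you go beyond the paper --- the uniqueness argument --- contains a genuine gap when $\lambda$ is not regular. Testing against $\varphi=\psi_\pi$ forces you to pair $Q_\lambda$ with the $W$-average $\widetilde{\psi_\pi}$, and after the change of variables the coefficient of the exponential $e^{i\lambda}$ in the resulting exponential polynomial in $\alpha$ is proportional to the twisted average $|W_\lambda|^{-1}\sum_{t\in W_\lambda} t\cdot Q_\lambda^t(\beta)$ (in the notation of \eqref{E:phi4}--\eqref{E:phi5}) rather than to $Q_\lambda(\beta)$: every $t\in W_\lambda$ contributes to the \emph{same} frequency, so ``independence over distinct $W$-orbit points'' cannot separate terms within a $W_\lambda$-coset. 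For regular $\lambda$ this is vacuous and your argument closes; for $W_\lambda\neq\{e\}$ the map $Q_\lambda\mapsto\Phi$ is in fact not injective on all of $\mathcal D'(K/M)\otimes H_\lambda$. Concretely, in rank one with $\lambda=0$ take $Q_0=(\delta_{eM}+\delta_{m^*M})\otimes p$, where $p$ is the linear coordinate $tH\mapsto t$ on $\fk a$: since $\widetilde\varphi(m^*M,tH)=\widetilde\varphi(eM,-tH)$, the two terms of \eqref{E:rankone1} cancel and $\Phi=0$ although $Q_0\neq 0$. So the pairings $\langle S_{\lambda,j},\beta\rangle$ are \emph{not} recoverable from $\Phi$ in general; uniqueness can only hold on the subspace of elements fixed by $Q\mapsto t\cdot Q^t$, $t\in W_\lambda$, a condition the $Q_\lambda$ produced by the forward construction automatically satisfies because $P_\lambda=t\cdot P_\lambda^t$ for $t\in W_\lambda$. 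To be fair, the paper's own one-line justification (uniqueness of $Q_\lambda$ from uniqueness of $P_\lambda$) elides the same point: uniqueness of $P_\lambda$ makes the forward construction canonical, but does not by itself show that the representation \eqref{E:phi6} is unique.
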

{\it Remarks:}
\begin{enumerate}
\item Fix a basis $P_1,\ldots,P_r$ of $H_\lambda$.  (We may choose
  this basis to have real coefficients.)  If $\Phi\in\mathcal
  D'_\lambda(\Xi_0)$, the theorem above says that there exist unique distributions $T_j$ on
$K/M$ such that
\begin{equation}\label{E:eigendist16}
  \Phi(\varphi)=\sum_{j=1}^r \int_{K/M}^{}\int_{\fk
  a}^{} P_j(H)\,\varphi[kM,H]\,e^{i\lambda(H)}\,dH\,dT_j(kM)
\end{equation}
for all $\varphi\in\mathcal D(\Xi_0)$.
Conversely, for any distributions $T_j$ on $K/M$, the right hand side
of \eqref{E:eigendist16} defines a distribution $\Phi\in\mathcal D'_\lambda(\Xi_0)$.
\item Because of the ambiguity in the argument of $\varphi[kM,H]$, one
  should, strictly speaking, write equation \eqref{E:eigendist16} as
$$
\Phi(\varphi)=\sum_{j=1}^m \int_{K/M}^{}\int_{\fk
  a}^{} P_j(H)\,\widetilde\varphi(kM,H)\,e^{i\lambda(H)}\,dH\,dT_j(kM)
$$
\item Equation \eqref{E:eigendist16}  can also be written as
\begin{equation}\label{E:eigendist15}
\Phi(\varphi)=\sum_{j=1}^m  \int_{K/M}^{}\partial(P_j^*)\,\varphi^*[kM,\lambda]\,dT_j(kM),
\end{equation}
where $\varphi^*$ is the (well-defined) Fourier-Laplace transform on
$\Xi_0$:
$$
\varphi^*[kM,\lambda]=\int_{\mathfrak
  a}^{}\varphi[kM,H]\,e^{i\lambda(H)}\,dH\qquad
([kM,\lambda]\in K/M\times_W \mathfrak a^*_c)
$$
\end{enumerate}

\begin{proof}
Equation \eqref{E:phi6} follows from \eqref{E:phi-value} by putting
$Q_\lambda=|W\cdot\lambda|\,P_\lambda$.  The uniqueness of $Q_\lambda$
is a consequence of the uniqueness of the $P_{s\lambda}$, and in particular, of $P_\lambda$.

Conversely, suppose that $Q_\lambda\in\mathcal D'(K/M)\otimes
H_\lambda$.  If we fix a basis $P_1,\ldots, P_m$ of $H_\lambda$, we
can, as in Remark (1) above, write $Q_\lambda=\sum_j S_j\otimes
P_j$. The distribution $\Phi$ in \eqref{E:phi6} is then given by \eqref{E:eigendist16}, and 
thus we need to prove that the right hand side of \eqref{E:eigendist16}
defines a distribution $\Phi\in\mathcal D'_\lambda(\Xi_0)$.   Now the product
$P(H)\,e^{i\lambda(H)}$ belongs to the joint eigenspace $\mathcal
E_{i\lambda}(\fk a)=\{\alpha\in\mathcal E(\mathfrak
a)\,|\,\partial(P)\,\alpha=P(i\lambda)\,\alpha\;\text{for all}\;P\in I(\mathfrak a)\}$.
Hence for any $Q\in I(\fk a)$, we have
\begin{align*}
  (D_Q(\Phi))(\varphi)&=\sum_{j=1}^m \int_{K/M}^{}\int_{\fk
  a}^{}
\partial(Q^*)\varphi[kM,H]\,P_j(H)\,e^{i\lambda(H)}\,dH\,dT_j(kM)\\
&=Q(i\lambda)\,\sum_{j=1}^m \int_{K/M}^{}\int_{\fk
  a}^{} \varphi[kM,H]\,P_j(H)\,e^{i\lambda(H)}\,dH\,dT_j(kM)\\
&=Q(i\lambda)\,\Phi(\varphi),
\end{align*}
for all $\varphi\in\mathcal D(\Xi_0)$, proving the theorem.
\end{proof}

\begin{corollary}\label{T:l-regular}
Suppose that $\lambda$ is regular.  Then there is a linear bijection
from $\mathcal D'(K/M)$ onto $\mathcal D'_\lambda(\Xi_0)$ given by
\begin{align}
  T&\mapsto\Phi\nonumber\\
\Phi(\varphi)&=\int_{K/M}^{}\int_{\fk
  a}^{}\varphi[kM,H]\,e^{i\lambda(H)}\,dH\,dT(kM)\nonumber\\
&=\int_{K/M}^{}\varphi^*[kM,\lambda]\,dT(kM)\label{E:eigendist17}
\end{align}
\end{corollary}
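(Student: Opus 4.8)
The plan is to derive Corollary~\ref{T:l-regular} as a direct specialization of Theorem~\ref{T:geneigendist} to the case where $\lambda$ is regular. First I would observe that when $\lambda\in\fk a^*_c$ is regular, the stabilizer $W_\lambda$ is trivial, so $W_\lambda$-harmonic polynomials are just the constants: $H_\lambda=\mathbb C$. Consequently the tensor product $\mathcal D'(K/M)\otimes H_\lambda$ collapses to $\mathcal D'(K/M)$ itself, and choosing the constant polynomial $1$ as a basis of $H_\lambda$, the general representation \eqref{E:phi6} (equivalently \eqref{E:eigendist16} with $r=1$ and $P_1\equiv 1$) becomes exactly
$$
\Phi(\varphi)=\int_{K/M}^{}\int_{\fk a}^{}\varphi[kM,H]\,e^{i\lambda(H)}\,dH\,dT(kM),
$$
which is the displayed formula in the corollary. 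The second equality in \eqref{E:eigendist17} is then just the definition of the Fourier--Laplace transform $\varphi^*$ recorded in Remark~(3) following Theorem~\ref{T:geneigendist}, applied under the $K/M$-integral; since $dH$ is integrated against a compactly supported $\varphi$, interchanging the order of integration is harmless.

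Next I would verify that the assignment $T\mapsto\Phi$ is a well-defined linear map into $\mathcal D'_\lambda(\Xi_0)$: this is precisely the ``conversely'' half of Theorem~\ref{T:geneigendist}, which guarantees that for any $T\in\mathcal D'(K/M)$ the right-hand side above defines a distribution in $\mathcal D'_\lambda(\Xi_0)$. Linearity in $T$ is immediate from linearity of the integral. For injectivity, suppose $\Phi=0$; applying the uniqueness clause of Theorem~\ref{T:geneigendist}, the associated element $Q_\lambda\in\mathcal D'(K/M)\otimes H_\lambda=\mathcal D'(K/M)$ is unique, hence $T=0$. For surjectivity, given any $\Phi\in\mathcal D'_\lambda(\Xi_0)$, Theorem~\ref{T:geneigendist} produces $Q_\lambda\in\mathcal D'(K/M)\otimes H_\lambda$; since $H_\lambda=\mathbb C$ this $Q_\lambda$ is simply a distribution $T$ on $K/M$, and \eqref{E:phi6} for this $Q_\lambda$ is exactly \eqref{E:eigendist17}. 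Thus the map is a bijection.

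There is essentially no obstacle here: the corollary is a bookkeeping consequence of the theorem once one notes $W_\lambda=\{e\}$ for regular $\lambda$ and hence $\dim H_\lambda=1$. The only point requiring a word of care is that the identification $\mathcal D'(K/M)\otimes H_\lambda\cong\mathcal D'(K/M)$ depends on the choice of the basis element of $H_\lambda$; taking the canonical choice $1\in\mathbb C=H_\lambda$ makes the correspondence canonical and matches the normalization in \eqref{E:phi6} (recall $Q_\lambda=|W\cdot\lambda|\,P_\lambda$, and for regular $\lambda$, $|W\cdot\lambda|=w$, which is absorbed into the single distribution $T$). I would close by remarking that this is the exact flat analogue of the parametrization in \eqref{E:eigendist01} for the eigendistributions on the horocycle space $\Xi$ of the symmetric space $X=G/K$.
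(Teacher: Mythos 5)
Your proof is correct and follows exactly the route the paper intends: the corollary is stated without proof as an immediate specialization of Theorem~\ref{T:geneigendist}, using the fact (already noted in the paper before \eqref{E:phi2}) that for regular $\lambda$ the group $W_\lambda$ is trivial, so $H_\lambda=\mathbb C$ and $\mathcal D'(K/M)\otimes H_\lambda$ is canonically $\mathcal D'(K/M)$, with bijectivity coming from the existence and uniqueness clauses of the theorem.
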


\section{Conical Distributions}
By definition, a {\it conical distribution} on $\Xi_0$ is an
$H$-invariant eigendistribution of $\mathbb D(\Xi_0)$, where, as we
recall, $H$ is the isotropy subgroup of $G_0$ fixing $\fk q$:
$H=M'\ltimes \fk q$.

Suppose that $\Phi$ is a conical distribution on $\Xi_0$ belonging to
$\mathcal D'_\lambda(\Xi_0)$.  ($\lambda$ is of course determined up
to $W$-orbit.)  
First, for simplicity, let us assume that $\lambda$ is regular.  
Then we see that  $\Phi$ satisfies
\eqref{E:eigendist17}, for  unique $T\in\mathcal D'(K/M)$. 

In order to determine this distribution $T$ more explicitly, we first prove that the
collection of functions on $K/M$ given by
$\{\varphi^*[kM,\lambda]\,|\,\varphi\in\mathcal D(\Xi_0)\}$ equals
$\mathcal E(K/M)$.

For this, we first prove the following easy lemma.

\begin{lemma}\label{T:fcnalindep}
For $f\in\mathcal D(\fk a)$ and $\gamma\in\mathcal E(\fk a)$, put 
$(f,\varphi)=\int_{\fk a}^{}f(H)\,\varphi(H)\,dH$.  Suppose that 
$\gamma_1,\ldots,\gamma_m$ are linearly independent elements of
$\mathcal E(\fk a)$.  Then there exist functions $f_1,\ldots,f_m$ in
$\mathcal D(\fk a)$ such that the $m\times m$ matrix $((f_i,\gamma_j))$ is nonsingular.
\end{lemma}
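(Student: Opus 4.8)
The plan is to prove linear independence over the ``pairing'' by an inductive separation argument, exploiting the fact that the $\gamma_j$ are smooth (hence determined by their values and derivatives) and that $\mathcal D(\fk a)$ is rich enough to detect any nonzero smooth function. First I would reduce the statement to the following sublemma: if $\gamma_1,\ldots,\gamma_m\in\mathcal E(\fk a)$ are linearly independent, then for each $i$ one can choose $f_i\in\mathcal D(\fk a)$ with $(f_i,\gamma_j)=\delta_{ij}$ (a ``dual basis''); such a choice obviously makes $((f_i,\gamma_j))$ the identity matrix, hence nonsingular. Actually a weaker conclusion suffices and is cleaner to obtain: it is enough to produce \emph{any} $f_1,\ldots,f_m$ making the Gram-type matrix nonsingular, so I would instead argue by contradiction.

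The core argument: suppose, for contradiction, that for every choice of $f_1,\ldots,f_m\in\mathcal D(\fk a)$ the matrix $((f_i,\gamma_j))$ is singular. Consider the linear map $\Gamma:\mathcal D(\fk a)\to\mathbb C^m$ sending $f\mapsto ((f,\gamma_1),\ldots,(f,\gamma_m))$. If the image of $\Gamma$ were all of $\mathbb C^m$, then picking $f_i$ with $\Gamma(f_i)=e_i$ (the standard basis vectors) would give the identity matrix, a contradiction; so $\Gamma(\mathcal D(\fk a))$ is a proper subspace of $\mathbb C^m$, hence contained in a hyperplane. That means there is a nonzero vector $(c_1,\ldots,c_m)\in\mathbb C^m$ with $\sum_j c_j (f,\gamma_j)=0$ for all $f\in\mathcal D(\fk a)$, i.e. $\bigl(f,\sum_j c_j\gamma_j\bigr)=0$ for all $f\in\mathcal D(\fk a)$. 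Since $\sum_j c_j\gamma_j$ is a continuous (indeed smooth) function on $\fk a$ that integrates to zero against every compactly supported test function, it must vanish identically — this is the standard fundamental lemma of the calculus of variations / du Bois-Reymond argument, using only continuity. But $\sum_j c_j\gamma_j=0$ with $(c_j)$ nonzero contradicts the assumed linear independence of the $\gamma_j$.

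The step that requires the most care — though it is still routine — is the passage from ``$\int f\gamma=0$ for all $f\in\mathcal D(\fk a)$'' to ``$\gamma\equiv0$''. For real-valued continuous $\gamma$ this is immediate by choosing $f$ a nonnegative bump supported near a point where $\gamma\neq0$; in the complex-valued case one applies this separately to the real and imaginary parts, or simply tests against $f=\bar\gamma\cdot\chi$ for a bump $\chi$. No real obstacle here; the main ``content'' of the lemma is just the observation that the failure of all such matrices to be nonsingular forces a nontrivial linear relation among the $\gamma_j$ detectable by integration, which contradicts their independence. I would write this up in half a page, leading with the contradiction setup and invoking the fundamental lemma of the calculus of variations by name.
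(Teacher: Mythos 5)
Your proof is correct, but it takes a different route from the paper's. The paper argues by induction on $m$: it expands the determinant of $((f_j,\gamma_k))$ by cofactors along the first row, obtaining $(f_1,\,A_1\gamma_1+\cdots+A_m\gamma_m)$ where $A_i$ is the $(1,i)$ minor; the inductive hypothesis lets one choose $f_2,\ldots,f_m$ with $A_1\neq 0$, linear independence then makes $A_1\gamma_1+\cdots+A_m\gamma_m$ a nonzero smooth function, and finally some $f_1$ pairs nontrivially with it. Your argument replaces the induction with a single duality step: if every matrix were singular, the image of the linear map $\Gamma(f)=((f,\gamma_1),\ldots,(f,\gamma_m))$ would miss the standard basis vectors, hence lie in a hyperplane, producing a nontrivial relation $\sum_j c_j\gamma_j$ annihilated by all of $\mathcal D(\fk a)$ and therefore identically zero --- contradicting independence. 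Both proofs bottom out in the same elementary fact (a nonzero continuous function pairs nontrivially with some test function, which the paper also invokes, tacitly, in its final step); yours is non-inductive and arguably more transparent, identifying the lemma as the statement that the $\gamma_j$ induce linearly independent functionals on $\mathcal D(\fk a)$, while the paper's determinant expansion is more hands-on and yields the nonsingular matrix constructively. One small point to keep when writing it up: the passage from ``image of $\Gamma$ is not all of $\mathbb C^m$'' to ``contained in a hyperplane'' uses that the image is a linear subspace, which holds since $\Gamma$ is linear; and in the complex-valued case the vanishing step is cleanest by testing against $f=\bar\gamma\cdot\chi$ for a nonnegative bump $\chi$, exactly as you indicate.
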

\begin{proof}
This is an easy induction on $m$, the result being obviously true for
$m=1$.  Now suppose that the result is true for  $m-1$, and let
$\gamma_1,\ldots,\gamma_m$ be linearly independent elements of
$\mathcal E(\fk a)$. Now for any $f_1,\ldots,f_m\in\mathcal D(\fk a)$,
we expand the determinant of $((f_j,\gamma_k))$ by minors along the first
row.  The result is the expression
\begin{equation}\label{E:fcnprod}
  (f_1,A_1\gamma_1+\cdots+A_m\gamma_m)
\end{equation}
where $A_i$ is the $(1,i)$ minor of $((f_j,\gamma_k))$.  By the
induction hypothesis, we may choose $f_2,\ldots,f_m\in\mathcal D(\fk
a)$ such that $A_1\neq 0$.  Then, by linear independence, the second
argument of \eqref{E:fcnprod} is a nonzero smooth function on $\fk a$.
Hence, there exists $f_1\in\mathcal D(\fk a)$ such that
\eqref{E:fcnprod} is not equal to zero.
\end{proof}

This easily implies that there exist functions $f_j\in\mathcal D(\fk
a)$ such that $((f_i,\gamma_j))$ is any prescribed $m\times m$ matrix.
For every $h\in\mathcal D(\mathfrak a)$, let $h^*$ denote its
Fourier-Laplace transform
$$
h^*(\lambda)=\int_{\mathfrak a}^{}h(H)\,e^{i\lambda(H)}\,dH\qquad (\lambda\in\mathfrak a^*_c)
$$

\begin{lemma}\label{T:Ksurj}
Let $\lambda\in\fk a^*_c$ be regular.  Let $R$ be the linear map from
$\mathcal D(\Xi_0)$ to $\mathcal E(K/M)$ given by
$R\varphi(kM)=\varphi^*[kM,\lambda]$.  Then $R$ is onto.
\end{lemma}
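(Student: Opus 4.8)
The plan is to show surjectivity of $R$ by combining two ingredients: a local statement identifying, for each fixed $kM$, which functions on $\fk a$ arise as $\varphi^*[kM,\lambda]$ as a Fourier-Laplace transform value, and a partition-of-unity argument to patch these together into a global function on $K/M$. First I would observe that for a single point $kM$ the value $\varphi^*[kM,\lambda]$ is just $h^*(\lambda)$ where $h(H)=\widetilde\varphi(kM,H)\in\mathcal D(\fk a)$, and since $\lambda$ is fixed, $h\mapsto h^*(\lambda)$ is a nonzero linear functional on $\mathcal D(\fk a)$; hence it is surjective onto $\mathbb C$. The point is to do this with enough uniformity in $kM$.

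The key step is therefore to choose, for an arbitrary target function $\beta\in\mathcal E(K/M)$, a lift to $\mathcal D(\Xi_0)$. Concretely I would fix a single function $h_0\in\mathcal D(\fk a)$ with $h_0^*(\lambda)\neq 0$ (possible by Lemma \ref{T:fcnalindep} with $m=1$, or simply because the Fourier-Laplace transform of a bump function does not vanish identically), and rescale so that $h_0^*(\lambda)=1$. Next, lift $\beta$ to an $M$-invariant function $\widetilde\beta$ on $K$, and consider the candidate $F(kM,H)=\widetilde\beta(kM)\,h_0(H)$ on $\widetilde\Xi_0=K/M\times\fk a$; this lies in $\mathcal D(\widetilde\Xi_0)$ and satisfies $\int_{\fk a} F(kM,H)\,e^{i\lambda(H)}\,dH=\beta(kM)$. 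The only remaining issue is that a general $F\in\mathcal D(\widetilde\Xi_0)$ need not be of the form $\widetilde\varphi=\varphi\circ\pi$ for $\varphi\in\mathcal D(\Xi_0)$; but the averaging operator $F\mapsto F_\pi$ introduced before \eqref{E:pullback} maps $\mathcal D(\widetilde\Xi_0)$ onto functions that descend to $\Xi_0$, and since $\lambda$ is regular one can arrange (using that the $W$-translates $e^{is\lambda(H)}$ are linearly independent, exactly the content of the regular case of Theorem \ref{T:geneigendist} and \eqref{E:phi2}) that the descended function still has the prescribed Fourier-Laplace value at $\lambda$. Equivalently, I would note that $\pi$ is a covering map and a smooth compactly supported function upstairs supported in a fundamental domain for the free $W$-action descends directly to $\mathcal D(\Xi_0)$; choosing $h_0$ supported in such a slice avoids the averaging subtlety altogether.

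The main obstacle is this last descent/well-definedness point: the parametrization $[kM,H]=\pi(kM,H)$ is $w$-to-one with $\pi(kM,H)=\pi(km_s^{-1}M,sH)$, so a function on $\widetilde\Xi_0$ represents a function on $\Xi_0$ only when it is $W$-invariant in the appropriate twisted sense, and naively $\widetilde\beta(kM)h_0(H)$ is not. I expect to handle this by replacing $h_0$ with its symmetrization or, more cleanly, by exploiting regularity of $\lambda$: on the open dense set where the $W$-orbit of $H$ is free, choose $h_0$ supported in a small ball $B$ such that $B,\,sB$ are disjoint for $s\neq e$, and extend $\widetilde\beta(kM)h_0(H)$ by the $W$-action to get a genuine pullback; its Fourier-Laplace transform at $\lambda$ is then $\beta(kM)$ times $\sum_{s\in W}(s\cdot h_0)^*(\lambda)$, and regularity of $\lambda$ together with a final rescaling guarantees this constant is nonzero. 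This yields $\varphi\in\mathcal D(\Xi_0)$ with $R\varphi=\beta$, proving surjectivity.
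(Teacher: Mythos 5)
There is a genuine gap, and it sits exactly at the point you correctly identify as the main obstacle. If you take $F(kM,H)=\beta(kM)\,h_0(H)$ and pass to $\varphi=F_\pi$ (or, equivalently, take $h_0$ supported in a small ball $B$ with $B\cap sB=\emptyset$ for $s\neq e$ and extend by the $W$-action), the resulting function on $\Xi_0$ satisfies
\begin{equation*}
\varphi^*[kM,\lambda]\;=\;\frac{1}{w}\sum_{s\in W}\beta(km_s^{-1}M)\,h_0^*(s\lambda),
\end{equation*}
not $\beta(kM)\cdot\sum_{s}(s\cdot h_0)^*(\lambda)$. The Fourier--Laplace transform integrates over all of $\fk a$, so it sees every sheet of the covering regardless of how the supports $sB$ are arranged; making the supports disjoint changes nothing. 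What you actually get is a twisted average of $\beta$ over the $W$-orbit of $kM$ in $K/M$, with weights $h_0^*(s\lambda)$, and for a general $\beta\in\mathcal E(K/M)$ (which need not be right-$M'$-invariant) this is not a nonzero multiple of $\beta(kM)$. So as written the construction does not produce a preimage of $\beta$.

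The repair is precisely where regularity of $\lambda$ must be used quantitatively, and it is what the paper does. Since $\lambda$ is regular, the $w$ points $s\lambda$ are distinct, so the exponentials $e^{is\lambda}$ are linearly independent and Lemma \ref{T:fcnalindep} produces $h_1,\ldots,h_w\in\mathcal D(\fk a)$ with $(h_i^*(s_j\lambda))$ invertible; one can then prescribe the values $F^*(kM,s\lambda)$ for \emph{all} $s\in W$ simultaneously. In the display above this lets you kill the unwanted terms (choose $h_0$ with $h_0^*(s\lambda)=w\,\delta_{s,e}$), or, as in the paper, make them line up (choose $F$ with $F^*(kM,s\lambda)=\beta(km_sM)$, so that after averaging every summand equals $\beta(kM)$). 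Your single normalization $h_0^*(\lambda)=1$ imposes one condition where $w$ are needed; your appeal to the linear independence of the $e^{is\lambda}$ is the right instinct, but it has to be implemented as an interpolation at the whole orbit $W\cdot\lambda$, not bypassed by localizing the support of $h_0$.
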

\begin{proof}
The proof requires some care since $\Xi_0$ is not the product manifold
$K/M\times\fk a$ but a quotient of it.  We will first prove that for
any distinct elements $\lambda_1,\ldots,\lambda_m\in\fk a^*_c$, and
 any functions $\beta_1,\ldots,\beta_m\in\mathcal E(K/M)$, there
exists a function $F\in\mathcal D(\widetilde\Xi_0)$ such
that $F^*(kM,\lambda_j)=\beta_j(kM)$ for all $k\in K$ and all $j$.

Now the functions $e^{i\lambda_1},\ldots,e^{i\lambda_m}$ are
linearly independent elements of $\mathcal E(\fk a)$.  Hence by Lemma
\ref{T:fcnalindep} there exist
functions $h_1,\ldots,h_m\in\mathcal D(\fk a)$ such that the $m\times
m$ matrix $(h_i^*(\lambda_j))$ is nonsingular. Thus the system
$$
\left(\begin{array}{ccc}
  h_1^*(\lambda_1)&\cdots&h_m^*(\lambda_1)\\
&\ddots&\\
h_1^*(\lambda_m)&\cdots&h_m^*(\lambda_m)
\end{array}\right)\,\left(\begin{array}{c}
  F_1(kM)\\ \vdots\\ F_m(kM)
\end{array}\right)=\left(
\begin{array}{c}
  \beta_1(kM)\\ \vdots\\ \beta_m(kM)
\end{array}
\right)
$$
has smooth solutions $F_1(kM),\ldots,F_m(kM)$.  Putting
$F(kM,H)=\sum_j F_j(kM)\,h_j(H)$, we see that $F\in\mathcal
D(\widetilde\Xi_0)$ and $F^*(kM,\lambda_j)=\beta_j(kM)$.

Now fix $\beta\in\mathcal E(K/M)$.  We will prove that there exists a
$\varphi\in\mathcal D(\Xi_0)$ such that
$\varphi^*[kM,\lambda]=\beta(kM)$ for all $kM\in K/M$.

From the above we know that there exists a function
$F\in\mathcal D(\widetilde\Xi_0)$ such that
$F^*(kM,s\lambda)=\beta(km_sM)$ for all $kM\in K/M$ and all $s\in W$.
(Here $m_s\in M'$ is any coset representative of $s$.)  Put
$\varphi=F_\pi$, so that $\varphi\in\mathcal D(\Xi_0)$.
Then $\varphi^*[kM,\mu]=(1/w)\cdot\sum_{s\in W} F^*(km_s^{-1}M,s\mu)$ for all
$kM\in K/M$ and all $\mu\in\fk a^*_c$.  In particular, 
\begin{align*}
  \varphi^*[kM,\lambda]&=\frac{1}{w}\,\sum_{s\in W}
  F^*(km_s^{-1}M,s\lambda)\\
&=\beta(kM)
\end{align*}
for all $kM\in K/M$.
\end{proof}

Resuming our investigation of conical distributions, let us assume, 
as before, that $\Phi$ is conical distribution in $\mathcal
D'_\lambda(\Xi_0)$, where $\lambda$ is a fixed regular element in $\fk
a^*_c$.  Let $T$ be the unique element of $D'(K/M)$ given by \eqref{E:eigendist17}.

The $M'$ invariance of $\Phi$ implies that
\begin{equation}\label{E:m'-inv}
  \int_{K/M}^{}\varphi^*[m'kM,\lambda]\,dT(kM)=\int_{K/M}^{}\varphi^*[kM,\lambda]\,dT(kM)
\end{equation}
for all $m'\in M'$.  By Lemma \ref{T:Ksurj}, the functions
$\varphi^*[kM,\lambda]$ run through $\mathcal E(K/M)$ as $\varphi$
runs through $\mathcal D(\Xi_0)$.  Thus \eqref{E:m'-inv} shows that
$T$ is a left $M'$-invariant distribution on $K/M$.

The $\fk q$-invariance of $\Phi$ then shows that
\begin{align}
  \int_{K/M}^{}\varphi^*[kM,\lambda]\,dT(kM)&=
\int_{K/M}^{}\varphi^*[kM,\lambda]\,e^{-i\lambda((k^{-1}X)_\fk
  a)}\,dT(kM)\nonumber\\
&=\int_{K/M}^{}\varphi^*[kM,\lambda]\,e^{-iB(kA_\lambda,X)}\,dT(kM)\label{E:qinv1}
\end{align}
By Lemma \ref{T:Ksurj}, this implies that 
\begin{equation}\label{E:q-inv}
  T=e^{-iB(k\cdot A_\lambda,X)}\,T
\end{equation}
for all $k\in K$ and all $X\in\fk q$.

We will now prove that the property \eqref{E:qinv1} implies that $T$
has support in the discrete subset $M'/M$ of $K/M$.  For
this, consider any $k_0\in K\setminus M'$.  Since $\lambda$ is
regular, $k_0\cdot A_\lambda\notin\fk a^*_c$. It is easy to see that there exists
$X\in\fk q$ such that $B(k_0\cdot A_\lambda,X)\notin 2\pi \mathbb Z$.
(This is done by scaling $X$ if necessary.)  Fixing this $X$, there  exists a
neighborhood $U$ of $k_0M$ in $K/M$ such that $B(k\cdot
A_\lambda,X)\notin 2\pi \mathbb Z$ for all $kM\in U$. Hence the
function 
$kM\mapsto e^{iB(k\cdot A_\lambda,X)}-1$ is never $0$ on $U$, whereas by \eqref{E:q-inv}
 the distribution
$(e^{iB(k\cdot A_\lambda,X)}-1)\,T$ on $K/M$ vanishes.
This implies that $T=0$ on $U$.  Since $k_0M$ was chosen arbitrarily
in $K/M\setminus M'/M$, this proves that $T$ has support in the discrete set $M'/M$.

In particular, $T$ has the form 
\begin{equation}\label{E:discrete}
  T=\sum_{s\in W} D_s\,\delta_{m_sM}
\end{equation}
where $D_s$ is a linear differential operator on $K/M$.  We will now
prove that in fact
\begin{equation}\label{E:deltadist}
  T=c\,\sum_{s\in W} \delta_{m_sM}
\end{equation}
for some constant $c$.  For this, it suffices to prove that near the
identity coset $eM$ of $K/M$, $T$ is a multiple of the delta function
at $eM$.  That is to say, it suffices to prove that for all smooth
functions $\beta$ on $K/M$ supported on a small neighborhood of $eM$,
then $T(\beta)=c\,\beta(eM)$.  The $M'$-invariance of $T$ then proves
\eqref{E:deltadist}.

To this end, we introduce local coordinates on $K/M$ near $eM$.
Let $T_1,\ldots,T_{m_\alpha}$ be an orthonormal basis (with
respect to $-B$) of $\fk l_\alpha$.  (We could use
$T^\alpha_i=2^{-1/2}\,E^\alpha_i$ from the proof of Lemma
\ref{T:invariantpoly1}.)   The collection $\{T_j\}_{1\leq
  j\leq m_\alpha, \alpha\in \Sigma^+}$ is then an orthonormal basis of
$\fk l$.  We list these basis elements as $T_1,\ldots, T_r$ and assume
that $T_j$ belongs to the generalized eigenspace $\fk k_{\alpha_j}$.
Then the map
\begin{equation}\label{E:chart}
  \exp(t_1\,T_1+\cdots+t_r\,T_r)M\mapsto (t_1,\ldots,t_r)
\end{equation}
defines a chart on a neightborhood $U$ of $eM$ in $K/M$.  We assume that $U\cap M'/M=\{eM\}$.

For each $j$ let us put
$X_j=-i(B(\alpha_j,\lambda))^{-2}\,\text{ad}(A_\lambda)\,T_j$.  Since
$\lambda$ is regular, $X_j$ is well defined, and it is easy to see that
$X_1,\ldots,X_r$ is a basis of $\fk q^c$, orthogonal with respect to the Killing form.

Now suppose that $\beta$ is a smooth function on $K/M$ with support in
$U$.  Then by \eqref{E:discrete}, we have
\begin{equation}\label{E:dist1}
  T(\beta)=\sum_J c_J D^J\beta(0)
\end{equation}
where the sum runs through a finite collection of multiindices
$J=(j_1,\ldots,j_r)$, the $c_J$ are constants, 
and $D^J=\partial^{j_1+\cdots+j_r}/\partial t_1^{j_1}\cdots\partial
t_r^{j_r}$.  

In the sum \eqref{E:dist1}, we claim that $c_J=0$ when $|J|>0$.  Then
of course $T(\beta)=c_0\,\beta(0)$, and this will prove
\eqref{E:deltadist}.  To prove this, let us assume, to the contrary,
that $c_J\neq 0$ for some $J\neq 0$. Let $N=\max\{|J|\,|\,c_J\neq 0\}$.
Now by  \eqref{E:q-inv} we have
\begin{equation}\label{E:q-inv2}
  \sum_J c_J D^J\beta(0)=
\sum_J c_J D^J\left(e^{-iB(\exp(t_1T_1+\cdots+t_rT_r)\cdot A_\lambda,X)}\,\beta\right)(0)
\end{equation}
for all $X\in\fk q$ and all smooth functions $\beta$ supported in $U$.
In \eqref{E:q-inv2} choose a $\beta$ which is identically $1$ on a small
neighborhood of $0$.  Then the left hand side of \eqref{E:q-inv2} is
$c_0$.  On the other hand, if we write $X=z_1\,X_1+\cdots+z_r\,X_r$,
where $z_j\in\mathbb C$, then the right hand
side is 
$$
\sum_J c_J D^J\left(e^{-iB(\exp(t_1T_1+\cdots+t_rT_r)\cdot A_\lambda,X)}\right)(0),
$$
a polynomial of degree $N$ in $z_1,\ldots,z_r$.  Its homogeneous
component of degree $N$ equals
$$
\sum_{|J|=N} c_J\,z^J,
$$
where we have put $z^J=z_1^{j_1}\cdots z_r^{j_r}$ when
$J=(j_1,\ldots,j_r)$.  This yields a contradiction.  We obtain the
following result.

\begin{theorem}\label{E:regularconical}
Suppose that $\lambda\in \fk a^*_c$ is regular.  Then the space of
conical distributions in $\mathcal D'_\lambda(\Xi_0)$ is
one-dimensional, with basis given by $\Phi_\lambda$, where
\begin{equation}\label{E:conicalbasis}
  \Phi_\lambda(\varphi)=\sum_{s\in W} \varphi^*[m_sM,\lambda].
\end{equation}
\end{theorem}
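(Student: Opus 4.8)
The plan is to assemble the theorem from the chain of reductions already built up in this section, so that almost all of the work has been done by the time we state it. Starting from a conical distribution $\Phi\in\mathcal D'_\lambda(\Xi_0)$ with $\lambda$ regular, Corollary \ref{T:l-regular} gives a unique $T\in\mathcal D'(K/M)$ with $\Phi(\varphi)=\int_{K/M}\varphi^*[kM,\lambda]\,dT(kM)$. The surjectivity in Lemma \ref{T:Ksurj} then lets us transfer the two invariance conditions on $\Phi$ into conditions on $T$: the $M'$-invariance of $\Phi$ forces $T$ to be left $M'$-invariant, and the $\mathfrak q$-invariance of $\Phi$ forces the identity $T=e^{-iB(k\cdot A_\lambda,X)}\,T$ for all $X\in\mathfrak q$, as in \eqref{E:q-inv}. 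The support argument (using regularity of $\lambda$ to produce, for any $k_0\notin M'$, an $X\in\mathfrak q$ with $B(k_0\cdot A_\lambda,X)\notin 2\pi\mathbb Z$) then pins the support of $T$ to the discrete set $M'/M$, so $T=\sum_{s\in W}D_s\,\delta_{m_sM}$ for differential operators $D_s$.

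The remaining step, and the real content, is to show each $D_s$ is scalar, i.e.\ $T=c\sum_{s\in W}\delta_{m_sM}$. By $M'$-invariance of $T$ it suffices to work near $eM$, where $T(\beta)=\sum_J c_J D^J\beta(0)$ in the chart \eqref{E:chart}. One plugs a $\beta\equiv 1$ near $0$ into the relation \eqref{E:q-inv2} coming from $\mathfrak q$-invariance; writing $X=\sum z_jX_j$ in the basis $X_j=-i\,B(\alpha_j,\lambda)^{-2}\,\mathrm{ad}(A_\lambda)\,T_j$ of $\mathfrak q^c$ (well-defined since $\lambda$ is regular), the right side becomes a polynomial in the $z_j$ whose top-degree part is $\sum_{|J|=N}c_J\,z^J$ while the left side is the constant $c_0$; matching forces $c_J=0$ for $|J|=N>0$, and induction on $N$ kills all higher-order terms. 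Hence $T(\beta)=c_0\,\beta(eM)$ near $eM$, and $M'$-invariance spreads this to $T=c\sum_s\delta_{m_sM}$.

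Feeding \eqref{E:deltadist} back into \eqref{E:eigendist17} yields $\Phi(\varphi)=c\sum_{s\in W}\varphi^*[m_sM,\lambda]$, so the space of conical distributions in $\mathcal D'_\lambda(\Xi_0)$ is spanned by $\Phi_\lambda$ as in \eqref{E:conicalbasis}. It remains only to check that $\Phi_\lambda$ is genuinely a nonzero conical distribution: it lies in $\mathcal D'_\lambda(\Xi_0)$ by Theorem \ref{T:geneigendist} (taking $T=\sum_s\delta_{m_sM}$, with $\lambda$ regular so $H_\lambda=\mathbb C$), it is manifestly $M'$-invariant since $M'$ permutes the cosets $m_sM$, and it is $\mathfrak q$-invariant because $(\exp X)\cdot[m_sM,H]=[m_sM,H+((m_s^{-1}\cdot X)_{\mathfrak a})]$ and $m_s^{-1}\cdot X\in\mathfrak q$ has zero $\mathfrak a$-component — so the evaluation at $H$ integrated against $e^{i\lambda(H)}\,dH$ is unchanged. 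Nonvanishing is immediate since $\varphi^*$ can be made nonzero at a single $m_sM$ by Lemma \ref{T:Ksurj}. The main obstacle is the scalarity argument for the $D_s$; everything else is bookkeeping using the lemmas already proved, so in the write-up I would present the support reduction and the $c_J=0$ induction carefully and treat the verification that $\Phi_\lambda$ is conical as a short direct check.
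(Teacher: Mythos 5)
Your proposal follows the paper's own proof essentially step for step: the reduction via Corollary \ref{T:l-regular} and Lemma \ref{T:Ksurj}, the support argument pinning $T$ to $M'/M$, the polynomial-in-$z$ argument showing the transversal derivatives vanish (the paper phrases this as a single contradiction with the maximal degree $N$ rather than an induction, but the content is identical), and the short direct check that $\Phi_\lambda$ is conical. The argument is correct and matches the paper's approach.
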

\begin{proof}
If $\Phi$ is a conical distribution in $\mathcal D'_\lambda(\Xi_0)$
then we have shown that $\Phi$ is a multiple of $\Phi_\lambda$.
Conversely \eqref{E:eigendist17} shows that $\Phi_\lambda$ belongs to
$\mathcal D'_\lambda(\Xi_0)$, with $T=\sum_{s\in W} \delta_{m_sM}$;
clearly $T$ satisfies \eqref{E:q-inv} and is $M'$-invariant, so $\Phi_\lambda$ is conical.
\end{proof}

\section{The Case of Non-regular $\lambda$} 

Just as in the symmetric space case, the problem of characterizing the space of conical distributions in $\mathcal
D'_\lambda(\Xi_0)$ appears to be rather difficult, in general, when $\lambda\in\mathfrak a^*_c$ is
not regular.  One can, however, show that
the space of conical distributions corresponding to any non-regular
$\lambda$ is infinite-dimensional.  To see
this, let $K_\lambda=Z_K(\lambda)=\{k\in K\,|\,k\cdot \lambda=\lambda\}$
and let $K_\lambda'=\{k\in K\,|\,k\cdot \lambda\in \mathfrak a^*_c\}$.
For any $k\in K_\lambda'$, there exists an element $m'\in M'$ such
that $k\cdot \lambda=m'\cdot \lambda$.  Thus $(m')^{-1}k\in
K_\lambda$, and so we see that $K_\lambda'=M'K_\lambda=\cup_{s\in W}
m_s K_\lambda=\cup_{s\in W} K_{s\cdot \lambda} m_s$.

Let  $\Sigma_\lambda^+=\{\alpha\in\Sigma^+\,|\,B(\alpha,\lambda)=0\}$,
and as before let $W_\lambda$ be the subgroup of $W$ fixing $\lambda$.  Then
$W_\lambda$ is the subroup of $W$ generated by the reflections along
the root hyperplanes in $\Sigma_\lambda^+$, and
$M'\cap K_\lambda=\cup_{s\in W_\lambda} m_s M$.

The Lie algebra of $K_\lambda$ is $\mathfrak k_\lambda=\mathfrak
m+\sum_\alpha \mathfrak l_\alpha$, where the sum is taken over all
$\alpha$ in
$\Sigma^+_\lambda$.
  If $\lambda$ is
not regular, then $\Sigma_\lambda^+$ is nonempty, and therefore the orbit $K_\lambda/M$ is a
submanifold of $K/M$ of positive dimension. The set $M'K_\lambda/M$
is a disjoint union of $|W|/|W_\lambda|$ translates of $K_\lambda/M$, given by
$m_s\,K_\lambda/M$, where $s$ ranges over  a  set of coset representatives in $W/W_\lambda$.

Let $f$ be any continuous
function on the orbit $K_\lambda/M$, invariant under left translation by
elements of $m_s M$, for all $s\in W_\lambda$. Such $f$ can be
obtained by averaging any continuous function on the orbit by
$M$ and then further averaging by the $m_s$.  The vector space of such
$f$ is infinite-dimensional, since close to the identity coset $eM$,
the space of $M$-orbits in $K_\lambda/M$ is parametrized by the space
of $M$-orbits on a ball centered at $0$ in
$\sum_{\alpha\in\Sigma_\lambda^+} \mathfrak l_\alpha$.  

If $s\in W$, we can
extend $f$ in a well-defined way to the translated orbit $m_sK_\lambda/M$ by setting $f(m_s
kM)=f(kM)$, for all $k\in K_\lambda$.  In this way, $f$ becomes an $M'$-invariant function
defined on the union of the translated orbits $m_sK_\lambda/M$, for all $s\in W$.

Now let us define the distribution $T_f$ on $K/M$ by
\begin{equation}\label{E:tf-def}
  T_f(F)=
\sum_s\int_{K_\lambda/M}^{}f(m_s k_\lambda M)\,F(m_s k_\lambda M)\,d(k_\lambda)_M,\qquad (F\in\mathcal E(K/M))
\end{equation}
where the sum is taken over a set of representatives $s$ of
$W/W_\lambda$.  
It is clear from the construction of $f$ that
$T_f$ is independent of the choice of the $m_s$ appearing on the right hand side above.
$T_f$ is then an $M'$-invariant distribution on $K/M$.  

Now, in accordance with Theorem \ref{T:geneigendist}, let us define
the distribution $\Phi_f$ in $\mathcal D'_\lambda(\Xi_0)$ by
\begin{equation}\label{E:conical1}
  \Phi_f(\varphi)=\int_{K/M}^{}\int_{\mathfrak a}^{}\widetilde\varphi(kM,H)\,e^{i\lambda(H)}\,dH\,dT_f(kM)
\end{equation}

Since $T_f$ is $M'$-invariant, so is $\Phi_f$.  To show that $\Phi_f$
is $\mathfrak q$-invariant, we use the expression \eqref{E:tf-def}
defining $T_f$:
$$
  \Phi_f(\varphi)=\sum_s\int_{K_\lambda/M}^{}\int_{\mathfrak
  s}^{}\widetilde\varphi(m_sk_\lambda M, H)\,e^{i\lambda(H)}\,dH\,d(k_\lambda)_M
$$
Let $X\in\mathfrak q$.  Then by \eqref{E:gpaction1} we have 
\begin{align*}
  \Phi_f(\varphi^{\tau(X)})=\sum_s \int_{K_\lambda/M}^{}\int_{\mathfrak
  s}^{}\widetilde\varphi(m_sk_\lambda M, H)\,e^{i\lambda(H)}\,e^{iB(m_s k_\lambda\cdot \lambda,X)}\,dH\,d(k_\lambda)_M
\end{align*}
But $m_s k_\lambda\cdot \lambda\in\mathfrak a^*_c$, and thus $B(m_s
k_\lambda\cdot \lambda,X)=0$, which shows that the right hand side
above equals $\Phi_f(\varphi)$.  

As remarked above, the space
of all continuous functions $f$ on $K_\lambda/M$ invariant under the
left action of $m_sM$, for all $s\in W_\lambda$, is
infinite-dimensional.  Hence the space of conical distributions in
$\mathcal D'_\lambda(\Xi_0)$ has infinite dimension.

In the case when the symmetric space $X=G/K$ has rank one; i.e., when
$\dim \mathfrak a=1$, it is possible to obtain a complete classification of the space of all
conical distributions in $\mathcal D'_0(\Xi_0)$.  In this case,
$\Sigma^+$ has one or two elements; let $\alpha$ be the indivisible
element.  Choose $H\in\mathfrak a$ such that $\alpha(H)=1$, and
identify  $\mathbb R$ with $\mathfrak a$ by $t\mapsto tH$.

Since we are assuming that $\lambda=0$, then $W_\lambda=W=\{\pm 1\}$, and so the space $H$ of
$W_\lambda$-harmonic 
polynomials on $\mathfrak a$ has basis $\{1,\,t\}$.  Suppose
that $\Phi\in\mathcal D'_0(\Xi_0)$ is a conical distribution.  Then from
 \eqref{E:eigendist16}, there exist uniquely determined $M'$-invariant
 distributions $T_0$ and $T_1$ on $K/M$ such that
\begin{equation}\label{E:rankone1}
  \Phi(\varphi)=\int_{K/M}^{}\int_{-\infty}^{\infty}\widetilde\varphi(kM,tH)\,dt\,dT_0(kM)+
\int_{K/M}^{}\int_{-\infty}^{\infty}\widetilde\varphi(kM,tH)\,t\,dt\,dT_1(kM)
\end{equation}
Since $\Phi$ is also invariant under left translation by any
$X\in\mathfrak q$, we have
\begin{align*}
  \Phi(\varphi)&=\int_{K/M}^{}\int_{-\infty}^{\infty}\widetilde\varphi(kM,tH+(k^{-1}\cdot X)_\mathfrak a)\,dt\,dT_0(kM)\\
&\qquad\qquad\qquad+
\int_{K/M}^{}\int_{-\infty}^{\infty}\widetilde\varphi(kM,tH+(k^{-1}\cdot
X)_\mathfrak a)\,t\,dt\,dT_1(kM)\\
&=\int_{K/M}^{}\int_{-\infty}^{\infty}\widetilde\varphi(kM,tH)\,dt\,dT_0(kM)+
\int_{K/M}^{}\int_{-\infty}^{\infty}\widetilde\varphi(kM,tH)\,t\,dt\,dT_1(kM)\\
&\qquad\qquad\qquad-\int_{K/M}^{}\int_{-\infty}^{\infty}\widetilde\varphi(kM,tH)\,B(k\cdot A_\alpha,X)dt\,dT_1(kM)\\
&=\Phi(\varphi)-\int_{K/M}^{}\int_{-\infty}^{\infty}\widetilde\varphi(kM,tH)\,dt\,\,B(k\cdot A_\alpha,X)\,dT_1(kM)
\end{align*}
Hence
\begin{equation}\label{E:q-trans1}
  \int_{K/M}^{}\int_{-\infty}^{\infty}\widetilde\varphi(kM,tH)\,dt\,\,B(k\cdot A_\alpha,X)\,dT_1(kM)=0
\end{equation}
for all $\varphi\in\mathcal D(\Xi_0)$.

If $T_0$ and $T_1$ are $M'$-invariant distributions on $K/M$ it is
clear that the condition \eqref{E:q-trans1} is also sufficient for the
distribution $\Phi$ in \eqref{E:rankone1} to be conical in $\mathcal
D'_0(\Xi_0)$.  In particular, $T_0$ can be arbitrary.

Now it is easy to see that the map
$\varphi\mapsto\int_{-\infty}^{\infty}\widetilde\varphi(kM,tH)\,dt$
maps $\mathcal D(\Xi_0)$ onto the vector space $\mathcal E_{M'}(K/M)$ of $C^\infty$ functions
$F$ on $K/M$ satisfying $F(kM)=F(km^*M)$ for all $k\in K$, where
$m*$ is any element in $M'\setminus M$.   Thus \eqref{E:q-trans1}
implies that $\Phi$ is conical if and only if the $M'$-invariant
distribution $T_1$ satisfies the condition
\begin{equation}\label{E:q-trans2}
  \int_{K/M}^{}\!F(kM)\,B(k\cdot A_\alpha,X)\,dT_1(kM)=0
\end{equation}
for any $X\in\mathfrak q$ and all $F\in\mathcal E_{M'}(K/M)$.  As we
shall show below, it turns out that {\it all} $M'$-invariant
distributions on $K/M$ satisfy the condition above.

Since $\dim \mathfrak a=1$, the set
$\Sigma^+$ consists of $\alpha$, and possibly $2\alpha$, with
multiplicities $m_\alpha$ and $m_{2\alpha}$, respectively.   Let $H_1$ be the
unit vector in $\mathfrak a$ such that $\alpha(H_1)>0$, and let $o$ denote the
identity coset $\{M\}$ in $K/M$.  Then we can endow $K/M$ with the
$K$-invariant Riemannian structure induced from the $\text{Ad}\,M$-invariant inner product on
$\mathfrak l\cong T_o(K/M)$ given by 
$$
\langle
T_\alpha+T_{2\alpha},T_\alpha'+T_{2\alpha}'\rangle=-\alpha(H_1)^2\,
B(T_\alpha,T_\alpha')-4\alpha(H_1)^2\,B(T_{2\alpha},T_{2\alpha}')
$$
for $T_\alpha,T_\alpha'\in\mathfrak l_\alpha$ and
$T_{2\alpha},T_{2\alpha}'\in\mathfrak l_{2\alpha}$.  One can easily show that the mapping
$kM\mapsto k\cdot H_1$  is  an isometry
from $K/M$ onto the unit sphere $S$ in $\mathfrak p$.  Whenever it is convenient, we will
identify $K/M$ with $S$ in this manner.

\begin{lemma}\label{T:conjugation}
Assume that $\dim \mathfrak a=1$.  Fix $m^*\in M'\setminus M$.  Then for every $kM\in K/M$, there exists an
$m\in M$ such that $m^*k(m^*)^{-1}M=mkM$.
\end{lemma}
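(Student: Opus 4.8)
The plan is to carry everything over to the unit sphere $S\subset\mathfrak p$ by means of the isometry $kM\mapsto k\cdot H_1$ of $K/M$ onto $S$, reduce the statement to a single assertion about one orthogonal transformation of $\mathfrak q$, and then dispose of it using the structure of the isotropy representation of $M$ on $\mathfrak q=\mathfrak q_\alpha\oplus\mathfrak q_{2\alpha}$.

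First I would record that, since $\dim\mathfrak a=1$ and $m^*\notin M=Z_K(\mathfrak a)$, $\mathrm{Ad}(m^*)$ acts on $\mathfrak a=\mathbb R H_1$ by $-1$, and likewise $\mathrm{Ad}((m^*)^{-1})H_1=-H_1$. Hence, if $v=k\cdot H_1$,
\[
\bigl(m^*k(m^*)^{-1}\bigr)\cdot H_1=\mathrm{Ad}(m^*)\,\mathrm{Ad}(k)\,\mathrm{Ad}((m^*)^{-1})H_1=-\mathrm{Ad}(m^*)\,v .
\]
Since left translation by $m\in M$ on $K/M$ corresponds under the identification with $S$ to $v\mapsto\mathrm{Ad}(m)v$, and since $k\cdot H_1$ runs through all of $S$, the lemma is equivalent to the statement that $-\mathrm{Ad}(m^*)v$ lies in the $M$-orbit of $v$ for every $v\in S$. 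Put $L=-\mathrm{Ad}(m^*)|_{\mathfrak p}$, an orthogonal transformation of $\mathfrak p$. Because $\mathrm{Ad}(m^*)$ sends $H_1$ to $-H_1$ it commutes with $\mathrm{ad}(H_1)^2$, so $L$ fixes $H_1$, hence preserves $\mathfrak q=\mathfrak a^\perp$, and moreover preserves each eigenspace $\mathfrak q_\alpha$, $\mathfrak q_{2\alpha}$ of $\mathrm{ad}(H_1)^2$, acting orthogonally on each. Writing $v=aH_1+v_\alpha+v_{2\alpha}$, and noting that $M$ likewise fixes $H_1$ and preserves $\mathfrak q_\alpha,\mathfrak q_{2\alpha}$, it therefore suffices to produce a \emph{single} $m\in M$ with $\mathrm{Ad}(m)v_\alpha=Lv_\alpha$ and $\mathrm{Ad}(m)v_{2\alpha}=Lv_{2\alpha}$.

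Next I would bring in the isotropy representation. Running through the rank-one symmetric pairs (the real, complex, quaternionic and Cayley hyperbolic spaces, and the $SL(2,\mathbb R)$ case), one checks that $M$ acts transitively on the unit sphere of $\mathfrak q_\alpha$ whenever $\dim\mathfrak q_\alpha\ge 2$, transitively on the unit sphere of $\mathfrak q_{2\alpha}$ whenever $\dim\mathfrak q_{2\alpha}\ge 2$, and that these two matchings can always be carried out by one element of $M$ (match the $\mathfrak q_{2\alpha}$-component first, then use the remaining freedom — e.g. the $Sp(n-1)$ factor in the quaternionic case, or the transitivity of $\mathrm{Spin}(7)$ on $S^7\times S^6$ in the Cayley case — to match the $\mathfrak q_\alpha$-component). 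Since $L$ is orthogonal on each summand it preserves the relevant norms, so every summand of dimension $\ge 2$ is handled at once.

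The delicate point — and the step I expect to be the main obstacle — is a summand $\mathfrak q_\beta$ ($\beta\in\{\alpha,2\alpha\}$) with $\dim\mathfrak q_\beta=1$: there $M$ may act trivially, so the $M$-orbit of $v_\beta$ is just $\{v_\beta\}$, and one must show outright that $L$ fixes $\mathfrak q_\beta$, i.e. that $\mathrm{Ad}(m^*)$ acts on $\mathfrak q_\beta$ by $-1$ and not by $+1$. I would handle this by first replacing $m^*$ by the canonical Weyl-group representative $m_\beta$ attached to $\beta$ — this is legitimate, for if $m^*=m_\beta m_0$ with $m_0\in M$ then $m^*k(m^*)^{-1}M=m_\beta(m_0km_0^{-1})m_\beta^{-1}M$, so the general case follows from the case $m^*=m_\beta$ — and then restricting attention to the $\theta$-stable three-dimensional subalgebra $\mathfrak s=\mathfrak g_\beta+\mathbb R A_\beta+\mathfrak g_{-\beta}\cong\mathfrak{sl}(2,\mathbb R)$ (three-dimensional precisely because $\dim\mathfrak g_\beta=\dim\mathfrak q_\beta=1$). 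Inside $\mathfrak s$ one has $\mathfrak s\cap\mathfrak k=\mathbb R E_\beta$ and $\mathfrak s\cap\mathfrak p=\mathbb R A_\beta\oplus\mathfrak q_\beta$, and $m_\beta=\exp(\tfrac{\pi}{2}H^*)$ with $H^*$ a suitable multiple of $E_\beta$ lies in the analytic subgroup $G_{\mathfrak s}$; hence the restriction of $\mathrm{Ad}(m_\beta)$ to the plane $\mathfrak s\cap\mathfrak p$ lies in the connected image of $\mathrm{Ad}(G_{\mathfrak s})$, i.e. is a rotation. Since that rotation sends $A_\beta$ to $-A_\beta$ it is rotation by $\pi$, so it acts by $-1$ on the complementary line $\mathfrak q_\beta$, as needed. (Alternatively, one may simply quote the standard formula for the action of $m_\beta$ on the restricted root spaces.) Combining the two cases gives $Lv$ in the $M$-orbit of $v$ for every $v\in S$, which is the assertion of the lemma.
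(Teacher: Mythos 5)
Your proof is correct and is essentially the paper's argument carried out in a different model of $K/M$: where the paper uses Nomizu's theorem to reduce the claim to showing $\mathrm{Ad}(m^*)\,T\in\mathrm{Ad}(M)\,T$ for $T\in\mathfrak l=\mathfrak l_\alpha\oplus\mathfrak l_{2\alpha}$, you use the sphere $S\subset\mathfrak p$ to reduce it to $-\mathrm{Ad}(m^*)\,v\in\mathrm{Ad}(M)\,v$ on $\mathfrak a\oplus\mathfrak q_\alpha\oplus\mathfrak q_{2\alpha}$, and the two essential ingredients --- Kostant's transitivity of $\mathrm{Ad}(M)$ on products of spheres in the two root-space summands, and an explicit rank-one subalgebra computation pinning down the sign of $\mathrm{Ad}(m^*)$ on a one-dimensional summand --- are the same (your $-1$ on $\mathfrak q_\beta$ is the mirror image of the paper's $+1$ on $\mathfrak l_\beta$). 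The only point left implicit is that in the mixed case $\dim\mathfrak q_{2\alpha}=1$, $\dim\mathfrak q_\alpha\ge 2$, the single $m\in M$ chosen to match the $\mathfrak q_\alpha$-components must also fix $v_{2\alpha}$, which requires that $\mathrm{Ad}(M)$ act trivially (not merely by $\pm 1$) on the one-dimensional space $\mathfrak q_{2\alpha}$ --- the same fact (\cite{GASS}, Chapter III, Lemma 3.8) that the paper invokes for $\mathfrak l_{2\alpha}$.
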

\begin{proof}
It is easy to see that the map $kM\mapsto m^*k(m^*)^{-1}M$ is a
well-defined isometry of $K/M$.  By Theorem 13.2 in \cite{Nom}, the map $T
\mapsto (\exp T)\,M$ maps $\mathfrak l$ onto $K/M$, and clearly
$m^*(\exp T)(m^*)^{-1}M=\exp (\text{Ad}(m^*)\,T)M$.  Thus it suffices
to prove that for each $T\in\mathfrak l$, there exists $m\in M$ such
that $\text{Ad}(m^*)\,T=\text{Ad}(m)\,T$.

This assertion can be proved by considering the possible
cases for $m_\alpha$ and $m_{2\alpha}$.  For convenience, let us now
provide $\mathfrak l$ with the inner product given by
$-B$, which we note that $\text{Ad}\,(m^*)$ leaves invariant.  Suppose first that $m_{2\alpha}>1$.  Write
$T\in\mathfrak l$ as $T=T_\alpha+T_{2\alpha}$, with
$T_\alpha\in\mathfrak l_\alpha,\,T_{2\alpha}\in\mathfrak l_{2\alpha}$.
For any $r,s\geq 0$, $\text{Ad}\, M$ is transitive on the product of
spheres $\{T'+T''\in\mathfrak l_\alpha+\mathfrak
l_{2\alpha}\,|\,\|T'\|=r,\,\|T''\|=s\}$ (\cite{Ko}).  Since
$\text{Ad}\,(m^*)$ 
is an isometry on $\mathfrak l_\alpha$ and on $\mathfrak l_{2\alpha}$, there
exists $m\in M$ such that
$\text{Ad}(m)T_\alpha=\text{Ad}(m^*)T_\alpha$ and
$\text{Ad}(m)T_\alpha=\text{Ad}(m^*)T_{2\alpha}$.

Suppose next that $m_{2\alpha}=0$ and $m_\alpha>1$.  Then $\mathfrak
l=\mathfrak l_\alpha$, and since $\text{Ad} M$ is transitive on spheres
in $\mathfrak l_\alpha$, our assertion easily holds in this case.

The remaining cases are $m_{2\alpha}=1$ (so $m_\alpha>1$) and
$m_\alpha=1$ (so $m_{2\alpha}=0$). Suppose that $m_{2\alpha}=1$.  We
claim that $\text{Ad}\,m^*$ is the identity map on $\mathfrak
l_{2\alpha}$. For this, we recall that $\mathfrak g$ has
 decomposition $\mathfrak
g=\mathfrak g_{-2\alpha}+\mathfrak g_{-\alpha}+\mathfrak m+\mathfrak
a+\mathfrak g_\alpha+\mathfrak g_{2\alpha}$.  Choose any nonzero
elements $X_\alpha\in\mathfrak g_\alpha$ and $X_{2\alpha}\in\mathfrak
g_{2\alpha}$.  Then $X_\alpha,\,\theta(X_\alpha),\,X_{2\alpha}$, and
$\theta(X_{2\alpha})$ generate a Lie subalgebra $\mathfrak g^*$ of
$\mathfrak g$ isomorphic to $\text{su}(2,1)$. Let
$G^*$ be the analytic subgroup of $G$ with this algebra.  Then $G^*$
has Iwasawa decomposition $G^*=K^*A N^*$, where $K^*=G^*\cap
K,\,N^*=G^*\cap N$.  If $M^*$ and $(M')^*$ denote the centralizer and
normalizer of $\mathfrak a$ in $K^*$, we also have $M^*=G^*\cap M$ and
$(M')^*=G^*\cap M'$.  Choose any element $m^*_1\in (M')^*\setminus
M^*$.  Then $m_1^*\in M'\setminus M$ so there exists an $m_1\in M$
such that $m_1^*=m^*m_1$.  Now from \cite{DS}, Chapter IX, \S3,
$\text{Ad}\,((m')^*)\,X_{2\alpha}=\theta(X_{2\alpha}),\;\text{Ad}\,((m')^*)\,\theta(X_{2\alpha})=X_{2\alpha}$,
 and thus
$\text{Ad}\,(m')^*$ fixes $X_{2\alpha}+\theta(X_{2\alpha})$.  But this
latter vector spans $\mathfrak l_{2\alpha}$.  Since $\text{Ad}\,M$ is
the identity map on $\mathfrak l_{\pm 2\alpha}$ (\cite{GASS}, Chapter III, Lemma 3.8) it follows that
$\text{Ad}\,m^*=\text{Ad}\,(m_1^*\,m_1^{-1})$ is the identity map on
$\mathfrak l_{\pm 2\alpha}$ as well.  Now since $\text{Ad}\,M$ is
transitive on spheres in $\mathfrak l_\alpha$, we conclude that for
any $T\in\mathfrak l_\alpha$ and $T'\in\mathfrak l_{2\alpha}$, there
exists an $m\in M$ such that $\text{Ad}\,(m^*)\,T=\text{Ad}\,(m)\,T$ and
$\text{Ad}\,(m^*)\,T'=\text{Ad}\,(m)\,T'=T'$.

Finally, suppose that $m_\alpha=1$. Choose any nonzero $X_\alpha\in
\mathfrak g_\alpha$.  Then $X_\alpha$ and $\theta(X_\alpha)$ generate a
subalgebra $\mathfrak g^*$ of $\mathfrak g$ isomorphic to
$\text{su}\,(1,1)$.  Let $G^*$ be the analytic subgroup of $G$ with
Lie algebra $\mathfrak g^*$, and let $(m')^*\in G^*\cap (M'\setminus
M)$.  There exists an $m_1\in M$ such that $(m')^*=m^*\,m_1$.  Now an
 easy matrix computation on $\text{SU}\,(1,1)$ shows that
 $\text{Ad}\,(m')^*\,X_\alpha=\theta(X_\alpha)$ and
 $\text{Ad}\,(m')^*\,\theta(X_\alpha)
=X_\alpha$, from which we again
 conclude that $\text{Ad}\,(m')^*$, and hence $\text{Ad}\,(m^*)$ is
 the identity map on $\mathfrak l_\alpha$. On the other hand, by \cite{GASS},
 Chapter III, Lemma 3.8, $\text{Ad}\,(M)$ is
 also the identity map on $\mathfrak l_\alpha$. Thus
 $\text{Ad}\,(m^*)\,T=\text{Ad}\,(m)\,T=T$ for all $m\in M$ and
 $T\in\mathfrak l_\alpha$.

This covers all the cases and finishes the proof of the lemma.
\end{proof}

We are now in a position to classify the conical distributions in
$\mathcal D'_0(\Xi_0)$ when $\dim \mathfrak a=1$.

\begin{theorem}\label{T:rankoneclass}
Assume that $\dim \mathfrak a=1$.  Then the conical distributions in
$\mathcal D'(\Xi_0)$ are precisely those distributions $\Phi$ given by
\begin{equation}\label{E:conicalclass}
  \Phi(\varphi)=\int_{K/M}^{}\int_{-\infty}^{\infty}\widetilde\varphi(kM,tH)\,dt\,dT_0(kM)+
\int_{K/M}^{}\int_{-\infty}^{\infty}\widetilde\varphi(kM,tH)\,t\,dt\,dT_1(kM)
\end{equation}
where $T_0$ and $T_1$ are $M'$-invariant distributions on $K/M$.
\end{theorem}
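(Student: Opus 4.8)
The plan is to deduce the theorem from the discussion preceding it, which has already reduced matters to a single statement about distributions on $K/M$, and then to prove that statement by a parity argument whose engine is Lemma~\ref{T:conjugation}.

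First I recall the reduction. By Theorem~\ref{T:geneigendist} in the form \eqref{E:eigendist16}, together with the $\fk q$-invariance computation carried out above, every conical distribution $\Phi\in\mathcal D'_0(\Xi_0)$ is of the form \eqref{E:conicalclass} for unique $M'$-invariant distributions $T_0,T_1$ on $K/M$, subject to the extra constraint \eqref{E:q-trans2} on $T_1$; conversely, for $M'$-invariant $T_0,T_1$ with $T_1$ satisfying \eqref{E:q-trans2}, formula \eqref{E:conicalclass} defines a conical distribution in $\mathcal D'_0(\Xi_0)$. Thus the theorem is equivalent to the assertion that \emph{every} $M'$-invariant distribution $T_1$ on $K/M$ automatically satisfies \eqref{E:q-trans2}, i.e.
\[
\int_{K/M}F(kM)\,B(k\cdot A_\alpha,X)\,dT_1(kM)=0
\]
for all $X\in\fk q$ and all $F\in\mathcal E_{M'}(K/M)$; this is the only point still to be proved.

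The key step is that every $M'$-invariant distribution on $K/M$ is \emph{even}. Using the isometry $kM\mapsto k\cdot H_1$ to identify $K/M$ with the unit sphere $S$ in $\fk p$, let $a\colon S\to S$ be the antipodal map $\omega\mapsto-\omega$. Since the nontrivial element of $W$ acts on $\fk a$ by $-1$, any $m^*\in M'\setminus M$ satisfies $m^*\cdot H_1=-H_1$, so $a$ corresponds on $K/M$ to the map $kM\mapsto km^*M$. I would first show that an $M'$-invariant function $f$ on $S$ satisfies $f\circ a=f$: given $\omega\in S$, put $\omega'=\text{Ad}(m^*)^{-1}\omega$ and write $\omega'=k\cdot H_1$; Lemma~\ref{T:conjugation} gives $m\in M$ with $m^*k(m^*)^{-1}M=mkM$, and evaluating this at $H_1$ yields $-\text{Ad}(m^*)\,\omega'=\text{Ad}(m)\,\omega'$, whence
\[
f(-\omega)=f\bigl(-\text{Ad}(m^*)\,\omega'\bigr)=f\bigl(\text{Ad}(m)\,\omega'\bigr)=f(\omega')=f\bigl(\text{Ad}(m^*)^{-1}\omega\bigr)=f(\omega),
\]
using the $M$- and $M'$-invariance of $f$. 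Since $M'$ is compact, averaging over $M'$ commutes with $a$ (the $M'$-average of $\psi\circ a$ equals $(\text{Avg}_{M'}\psi)\circ a$, which is $a$-invariant by the above), so an $M'$-invariant distribution $T_1$ satisfies $T_1(\psi\circ a)=T_1(\text{Avg}_{M'}\psi)=T_1(\psi)$ for all test functions $\psi$; that is, $T_1\circ a=T_1$.

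The theorem now follows by a parity mismatch. In \eqref{E:q-trans2} the integrand is $G(kM):=F(kM)\,B(k\cdot A_\alpha,X)$. Because $\dim\fk a=1$, $A_\alpha$ is a nonzero scalar multiple of $H_1$, so $B(k\cdot A_\alpha,X)$ is a constant multiple of $\langle k\cdot H_1,X\rangle=\langle\omega,X\rangle$, which is odd under $a$; and $F\in\mathcal E_{M'}(K/M)$ means $F(kM)=F(km^*M)$, i.e. $F(\omega)=F(-\omega)$, so $F$ is even under $a$. Hence $G\circ a=-G$, and by the evenness of $T_1$ just established $T_1(G)=T_1(G\circ a)=-T_1(G)$, so $T_1(G)=0$, which is precisely \eqref{E:q-trans2}. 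Combined with the reduction in the second paragraph, this proves the theorem. The only delicate point in the whole argument is the implication ``$M'$-invariant $\Rightarrow$ even,'' and Lemma~\ref{T:conjugation} is exactly what is needed to make it go through; everything else is formal.
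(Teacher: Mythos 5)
Your proposal is correct and follows essentially the same route as the paper: reduce the theorem to showing that every $M'$-invariant distribution $T_1$ on $K/M$ annihilates $G(kM)=F(kM)\,B(k\cdot A_\alpha,X)$, then use Lemma~\ref{T:conjugation} to establish the antipodal invariance underlying a parity argument. The only (cosmetic) difference is that you package this as ``$M'$-invariant distributions are even,'' whereas the paper works directly with the averaging operator $F\mapsto F^{\#}$ and shows $G^{\#}=0$ for odd $G$; the two formulations are equivalent.
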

\begin{proof}
As remarked in \eqref{E:rankone1}, any conical distribution $\Phi$ in
$\mathcal D'_0(\Xi_0)$ must be of the form \eqref{E:conicalclass},
with $T_0$ and $T_1$ $M'$-invariant.

Conversely, suppose that $\Phi\in \mathcal D'(\Xi_0)$ is defined by
\eqref{E:conicalclass} with $T_0$ and $T_1$ $M'$-invariant. By Theorem
\ref{T:geneigendist}, $\Phi$ belongs to $\mathcal D'_0(\Xi_0)$, and it
is clear that $\Phi$ is $M'$-invariant.  To prove that $\Phi$ is
conical, it is sufficient to verify that $T_1$ satisfies
\eqref{E:q-trans2} for all $F\in\mathcal E(K/M)$ such that
$F(kM)=F(km^*M)$.  

To this end,   let us put
$F^\#(kM)=\int_{M'}^{}F(m'kM)\,dm'$ for any function $F\in\mathcal E(K/M)$, where $dm'$ is the normalized Haar measure
on the compact group $M'$.   Note that since $T_1$ is $M'$-invariant,
$T_1(F)=T_1(F^\#)$ for all $F\in\mathcal E(K/M)$.

Lemma \ref{T:conjugation} shows that for any $kM\in K/M$, there exists
$m_1\in M$ such that $m^*k\cdot
H_1=-m^*k(m^*)^{-1}\cdot H_1=-m_1k\cdot H_1$.  If $a:\omega\mapsto-\omega$
denotes the antipodal map on the sphere $S$, then $a(k\cdot H_1)=k(m^*)^{-1}\cdot
H_1$, so $a$ corresponds to the isometry $kM\mapsto k(m^*)^{-1}M$ of
$K/M$.

 Noting that $F^a(kM)=F(k(m^*)^{-1}M)$, we
see from the definition of $F^\#$ that
that $(F^\#)^a=(F^a)^\#$. On the other hand, for any $kM\in K/M$, we put $k\cdot
H_1=\omega$. Applying Lemma \ref{T:conjugation}, we have
\begin{align*}
  (F^a)^\#(\omega)&=\int_{M'}^{}F(m'k(m^*)^{-1}M)\,dm'\\
&=\int_{M'}^{}F(m'm^*k(m^*)^{-1}M)\,dm'\\
&=\int_{M'}^{} F(m'm_1kM)\,dm'\qquad\qquad (\text{for some}\;m_1\in M)\\
&=F^\#(\omega).
\end{align*}
In particular, if $F\in\mathcal E(K/M)$ corresponds to an odd function on $S$, we have $F^\#=0$.

Now suppose that $F\in\mathcal E(K/M)$ satisfies $F(km^*M)=F(kM)$ for
all $k\in K$.  Then $F$ corresponds to an even function on $S$, and
for each fixed $X\in\mathfrak q$, the function $G(kM)=F(kM)\,B(k\cdot A_\alpha, X)$ 
is an odd function on $S$.  It follows that $G^\#=0$ and thus
\begin{align*}
  \int_{K/M}^{}\!F(kM)\,B(k\cdot A_\alpha,X)\,dT_1(kM)&=T_1(G)\\
&=T_1(G^\#)\\
&=0.
\end{align*}

Thus \eqref{E:q-trans2} holds for all such $F$, and we conclude that
the distribution $\Phi$ in \eqref{E:rankone1} is conical if and only
if $T_0$ and $T_1$ are $M'$-invariant.
\end{proof}

It is curious that the $M'$-invariance of any distribution in $\mathcal D'_0(\Xi_0)$
guarantees its $\mathfrak q$-invariance.

\section{Acknowledgements}
The author would like to express his gratitude to Prof. S. Helgason for his
valuable assistance in the preparation of this paper and in particular
in the proof of Lemma \ref{T:conjugation}.

\end{document}